\documentclass[11pt,reqno]{amsart}
\usepackage{lmodern}
\usepackage{amsmath,amssymb,amsthm,mathtools,mathrsfs}
\usepackage[margin=2.5cm]{geometry}
\usepackage{microtype}
\usepackage{needspace}
\usepackage{xcolor}
\usepackage{hyperref}
\usepackage[nameinlink,capitalize,noabbrev]{cleveref}

\hypersetup{
  colorlinks=true,
  linkcolor=blue!45!black,
  citecolor=green!35!black,
  urlcolor=blue!55!black,
  pdftitle={Global regularity for the unidirectional Euler-alignment system with supercritical dissipation},
  pdfauthor={Changhui Tan and Liutang Xue},
  pdfsubject={Global regularity for a supercritical Euler--alignment system},
  pdfkeywords={Euler--alignment system, supercritical dissipation, fractional Laplacian, moduli of continuity, scaling invariance, weighted convexity}
}

\renewcommand{\eqref}[1]{\textup{(\ref{#1})}}
\numberwithin{equation}{section}

\newtheorem{theorem}{Theorem}[section]
\newtheorem{proposition}[theorem]{Proposition}
\newtheorem{lemma}[theorem]{Lemma}

\theoremstyle{definition}

\newcommand{\T}{\mathbb T}
\newcommand{\R}{\mathbb R}
\newcommand{\N}{\mathbb N}
\newcommand{\Z}{\mathbb Z}
\newcommand{\dd}{\,\mathrm d}

\newcommand{\mean}[1]{\langle #1\rangle}
\newcommand{\norm}[2]{\left\lVert #1\right\rVert_{#2}}

\newcommand{\comm}[2]{\left[#1,#2\right]}

\def\G{\Gamma}
\def\rhom{\underline\rho}

\title[Global regularity for supercritical Euler-alignment]{Global regularity for the unidirectional Euler-alignment system with supercritical dissipation}

\author[Changhui Tan]{Changhui Tan}
\address[Changhui Tan]{\newline Department of Mathematics, University of South Carolina, Columbia SC 29208, USA}
\email{tan@math.sc.edu}

\author[Liutang Xue]{Liutang Xue}
\address[Liutang Xue]{\newline School of Mathematical Sciences, Laboratory of Mathematics and Complex Systems (MOE), Beijing Normal University, Beijing 100875, P.R. China}
\email{xuelt@bnu.edu.cn}

\date{\today}

\subjclass[2020]{35Q35, 35B40, 35B65, 35R11, 76N10}
\keywords{Euler--alignment system, unidirectional flow, supercritical dissipation, global regularity, modulus of continuity, flocking}

\thanks{\textit{Acknowledgment.}
C. Tan is supported by NSF grant DMS-2238219.
L. Xue is supported by National Natural Science Foundation of China (No. 12271045, 12571245, 12671270).}

\begin{document}

\begin{abstract}
We prove global well-posedness of the periodic unidirectional
Euler--alignment system in every dimension for supercritical
dissipation of order $0<\alpha<1$ and arbitrary non-vacuum initial
data in the Sobolev class of the local theory.
The key idea is to propagate simultaneously two moduli of continuity
at critical scales for the density $\rho$ and the potential gradient
$\Gamma=\nabla\Lambda^{-\alpha}u$, both of which have scaling-invariant
amplitudes.  We also establish exponential alignment of the velocity
and exponential convergence of the density to a traveling profile.
\end{abstract}

\maketitle

\section{Introduction}

The Euler--alignment system is a macroscopic model of collective motion
in which individuals adjust their velocities toward those of their
neighbors.  It arises from the Cucker--Smale particle dynamics
\cite{cucker2007emergent}; see also \cite{ha2008particle,carrillo2017review}
for hydrodynamic derivations and related models.  The system takes the form
\begin{equation}\label{eq:EAS}
 \begin{cases}
   \,\partial_t\rho+\nabla\cdot(\rho\mathbf u) = 0,\\[5pt]
   \,\partial_t\mathbf u+\mathbf u\cdot\nabla\mathbf u
   = \displaystyle\int_{\R^d} \psi(x-y)\bigl(\mathbf u(y)-\mathbf u(x)\bigr) \rho(y)\dd y,
 \end{cases}
\end{equation}
where $\rho$ is the density, $\mathbf u=(u_1,\ldots,u_d)$ is the velocity,
and the nonnegative communication kernel $\psi$ describes the strength
of the alignment interaction.

The regularity of solutions to \eqref{eq:EAS} depends strongly on the
singularity of $\psi$.  For bounded Lipschitz kernels, alignment acts
primarily as a nonlocal damping mechanism, and global regularity is
governed by critical-threshold conditions on the initial data; see,
for example, \cite{tadmor2014critical, carrillo2016critical}.
Weakly singular but integrable kernels exhibit related threshold
behavior \cite{tan2020euler}.

In this paper, we consider strongly singular kernels of the form
\[
  \psi(x)=c_{d,\alpha}|x|^{-d-\alpha},
  \qquad x\in\R^d\setminus\{0\},\quad 0<\alpha<2,
\]
where $c_{d,\alpha}>0$ is normalized so that the alignment force becomes
\[
  -\Lambda^\alpha(\rho\mathbf u)+\mathbf u\Lambda^\alpha\rho,
  \qquad \Lambda^\alpha=(-\Delta)^{\alpha/2}.
\]
Singular integrals are understood in the principal-value sense when necessary. The alignment force thus generates a density-weighted nonlocal diffusion, whose leading-order dissipative effect remains nondegenerate as long as the density stays strictly positive. In contrast, the presence of vacuum may lead to singularity formation even in one dimension \cite{tan2019singularity,arnaiz2021singularity}. Motivated by this distinction, throughout this paper we consider \eqref{eq:EAS} on the periodic domain $\T^d$ and and assume that the initial density is bounded away from vacuum $\rho_0(x)>0$. For periodic solutions, the singular integral operators above are understood as acting on periodic extensions.

The one-dimensional theory for the singular Euler--alignment system has been established in \cite{do2018global,shvydkoy2017eulerian,shvydkoy2018eulerian}. A key feature is an additional structure that is not apparent from the velocity equation alone. The auxiliary quantity
\[
  G=\partial_xu-\Lambda^\alpha\rho
\]
satisfies the continuity equation
\[
  \partial_tG+\partial_x(uG)=0.
\]
This remarkable conservation law provides an essential structural control on the coupling between the velocity and the density.

The dissipative mechanism is particularly transparent in the special class $G_0=0$. In this case $G\equiv0$, and the system reduces to
\[
  \partial_t\rho+u\partial_x\rho
  =-\rho\Lambda^\alpha\rho,
  \qquad
  u=-\partial_x\Lambda^{\alpha-2}\rho.
\]
Thus the alignment interaction appears directly as a density-weighted fractional dissipation, closely related to fractional porous-medium flow with nonlocal pressure \cite{caffarelli2011nonlinear}. More generally, the conservation law for $G$, together with this dissipative mechanism, is a fundamental ingredient in the global well-posedness theory for arbitrary smooth non-vacuum initial data.

The theory is much less developed in multiple dimensions. For a general vector field $\mathbf u$, the natural analogue
\[
  G=\nabla\cdot\mathbf u-\Lambda^\alpha\rho
\]
obeys
\[
  \partial_tG+\nabla\cdot(G\mathbf u) = (\nabla\cdot\mathbf u)^2 -\operatorname{tr}\bigl((\nabla\mathbf u)^2\bigr).
\]
The source term on the right is known as the spectral gap. It has no definite sign and does not vanish for a general multidimensional flow. Thus the conservation law for $G$, which is decisive in one dimension, is lost.

The absence of such a conserved quantity presents a fundamental difficulty in controlling the coupling between the density and the velocity gradient. In particular, the one-dimensional mechanism that leads to global regularity for arbitrary smooth non-vacuum data does not extend directly to general multidimensional flows. Consequently, the existing global well-posedness theory in multiple dimensions is largely perturbative. Global solutions have been constructed for small perturbations of aligned flocks or constant states; see, for example, \cite{shvydkoy2019global,danchin2019regular}. A global theory for arbitrary smooth non-vacuum initial data remains largely open.

Unidirectional flows form an important intermediate class between the one-dimensional and general multidimensional settings. Lear and Shvydkoy \cite{lear2021unidirectional} considered velocities of the form
\[
  \mathbf u(x,t)=u(x,t)\mathbf d,
  \qquad \mathbf d\in\mathbb S^{d-1},
\]
an ansatz that is preserved by the Euler--alignment evolution. Without loss of generality, we take the distinguished direction to be $\mathbf d=e_1$. Then
\[
  \mathbf u=(u,0,\ldots,0),
  \qquad
  \nabla\cdot\mathbf u=\partial_1u.
\]
For such flows, the spectral gap vanishes identically:
$
  (\nabla\cdot\mathbf u)^2
  -\operatorname{tr}\bigl((\nabla\mathbf u)^2\bigr)=0.
$
Hence the auxiliary quantity
\[
  G=\partial_1u-\Lambda^\alpha\rho
\]
satisfies the conservation law
\[
  \partial_tG+\partial_1(uG)=0.
\]
Thus the key one-dimensional conservation structure is recovered.
The unidirectional Euler--alignment system therefore takes the form
\begin{equation}\label{eq:main}
 \begin{cases}
   \,\partial_t\rho+\partial_1(\rho u) = 0,\\
   \,\partial_tu+u\partial_1u
   =-\Lambda^\alpha(\rho u)+u\Lambda^\alpha\rho,
 \end{cases}
\end{equation}
on $\T^d\times[0,T)$. Although the transport is directed along $x_1$, both $\rho$ and $u$ depend on all $d$ spatial variables, and $\Lambda^\alpha$ acts in every direction. Accordingly, this is not merely a one-dimensional problem embedded in a higher-dimensional space. In particular, the relation
\begin{equation}\label{eq:relation}
  \partial_1u=\Lambda^\alpha\rho+G
\end{equation}
controls only the derivative of $u$ in the transport direction and provides no direct control of the transverse derivatives $\partial_ku$, $k\neq1$. Even in the special class $G_0=0$, where $G\equiv0$, this relation alone does not close the transverse regularity estimates. This constitutes the main structural distinction between the unidirectional system and its genuinely one-dimensional counterpart.

The fractional Burgers equation is a second useful benchmark.  If the
density $\rho$ in the velocity equation is frozen at one, the
alignment operator becomes $-\Lambda^\alpha u$, leading formally to
\[
  \partial_tu+u\partial_1u=-\Lambda^\alpha u.
\]
Solutions of fractional Burgers remain globally regular for
$\alpha\geq1$, while finite-time singularities may occur for
$0<\alpha<1$ \cite{kiselev2008blow}.  In the
one-dimensional Euler--alignment system, the variable density weight
enhances the dissipative mechanism sufficiently to prevent this
Burgers-type breakdown for non-vacuum data.  A central question left by
the preceding theory was whether this enhancement survives in the
multidimensional unidirectional setting, or whether it acts
effectively only in the distinguished direction $x_1$.

Lear and Shvydkoy \cite{lear2021unidirectional} established global regularity for the unidirectional system in the subcritical range $1<\alpha<2$. The terminology is consistent with the natural scaling
\begin{equation}\label{eq:scaling}
  \rho_\lambda(x,t)=\rho(\lambda x,\lambda^\alpha t),
  \qquad
  u_\lambda(x,t)=\lambda^{\alpha-1}
  u(\lambda x,\lambda^\alpha t).
\end{equation}
When $\alpha>1$, the velocity amplitude is subcritical under this scaling. In particular, the fractional dissipation is strong enough to dominate the Burgers-type transport, much as in the subcritical fractional Burgers equation. In this regime, one may therefore treat the velocity equation directly as a dissipative transport equation, without relying on the more delicate one-dimensional mechanism. Once $u$ is controlled, the density can be recovered from
\[
  \rho=\partial_1\Lambda^{-\alpha}u-\Lambda^{-\alpha}G.
\]
Indeed, for $\alpha>1$, the operator $\partial_1\Lambda^{-\alpha}$ has negative order $1-\alpha<0$ and is therefore smoothing. Together with the control of the conserved quantity $G$, this allows the regularity of $\rho$ to be recovered from that of $u$.

The critical case $\alpha=1$ is considerably more subtle. Lear \cite{lear2023global} obtained global regularity under the special null-entropy condition $G_0=0$, as well as a separate small-data result for general initial entropy. The first large-data global well-posedness result for general unidirectional flows at the critical exponent was obtained by Li et al. \cite{li2024global}. In the critical regime, both unknowns have scaling-invariant amplitudes,
\[
  \rho_\lambda(x,t)=\rho(\lambda x,\lambda t),
  \qquad
  u_\lambda(x,t)=u(\lambda x,\lambda t),
\]
and the reconstruction of the density becomes borderline:
\[
  \rho=\partial_1\Lambda^{-1}u-\Lambda^{-1}G.
\]
The leading operator $\partial_1\Lambda^{-1}$ is a Riesz transform and is not bounded from $L^\infty$ to $L^\infty$. Thus control of the velocity alone no longer yields sufficient control of the density. This difficulty also appears directly in the alignment force: the commutator remainder contains simultaneous increments of $\rho$ and $u$ and cannot be controlled by a modulus of continuity for $u$ alone.

The main idea of \cite{li2024global} is therefore to propagate moduli of continuity for $\rho$ and $u$ simultaneously. Since both quantities have critical, scaling-invariant amplitudes at $\alpha=1$, the two moduli can be chosen compatibly so that their breakthrough scenarios are coupled and ruled out together. This simultaneous propagation of multiple moduli of continuity is a genuinely system-level extension of the modulus method developed for critical scalar equations such as the critical Burgers and quasi-geostrophic equations \cite{kiselev2008blow,kiselev2007global}.

The supercritical regime $0<\alpha<1$ presents a fundamentally different difficulty. Here the fractional dissipation is too weak, by itself, to control Burgers-type steepening. Thus any large-data global regularity mechanism must exploit the special structure inherited from the one-dimensional Euler--alignment system. However, in the multidimensional unidirectional setting, the relation \eqref{eq:relation} controls only the derivative in the distinguished direction $x_1$. It provides no direct control of the transverse derivatives of $u$, while in the supercritical range these transverse derivatives cannot be handled by dissipation alone.

In \cite{li2024global}, this difficulty led to the continuation criterion
\begin{equation}\label{eq:contcrit}
  u\in L^\infty_t C^\sigma_x,
  \qquad \text{for some }\sigma\in(1-\alpha,1).
\end{equation}
In view of the scaling \eqref{eq:scaling}, the exponent $1-\alpha$ is critical for the velocity. Thus, once a H\"older bound strictly above this critical level is available, the remaining nonlinear effects become perturbative and the solution can be continued. The essential unresolved issue, however, is to prove such a bound dynamically from arbitrary large initial data; this was not addressed in \cite{li2024global}.
\bigskip

The goal of the present work is to resolve precisely this issue. We prove global well-posedness in the full supercritical range without imposing any additional continuation criterion or a priori H\"older control on the velocity. In particular, the transverse regularity is obtained dynamically, despite the weakness of the dissipation and the fact that the structural relation acts directly only in the distinguished direction. We also prove exponential alignment of the velocity and convergence of the density to a traveling profile.

Our main result is stated as follows.
\begin{theorem}[Global classical solutions and asymptotic alignment]
\label{thm:main}
Let $d\geq1$, $0<\alpha<1$, and let
$m\in\mathbb N$ satisfy $m>d/2+1$.  Assume that
\begin{equation}
  (\rho_0,u_0)
  \in
  H^{m+\alpha}(\T^d)\times H^{m+1}(\T^d),
  \qquad
  \inf_{x\in\T^d}\rho_0(x)>0.
  \label{eq:data}
\end{equation}
Then the unidirectional Euler--alignment system
\eqref{eq:main} admits a unique global
classical solution satisfying
\begin{align}
  \rho
  &\in
  C_{\mathrm w}\bigl([0,\infty);H^{m+\alpha}(\T^d)\bigr),
  \notag\\
  u
  &\in
  C_{\mathrm w}\bigl([0,\infty);H^{m+1}(\T^d)\bigr)
  \cap
  L^2_{\mathrm{loc}}
  \bigl([0,\infty);\dot H^{m+1+\alpha/2}(\T^d)\bigr).
  \label{eq:global-solution-class}
\end{align}
Here $C_{\mathrm w}$ denotes weak continuity in time.

Moreover, there exist a positive profile $\rho_\infty\in W^{1,\infty}(\T^d)$
with the same total mass as $\rho_0$ and constants $C<\infty$ and
$\kappa>0$, depending only on the initial data and the fixed parameters,
such that
\begin{align}
  \norm{\rho(\cdot,t)-\rho_\infty(\cdot-\bar u t e_1)}{L^\infty(\T^d)}
  &\leq Ce^{-\kappa t},
  \label{eq:main-density-alignment}\\
  \norm{u(\cdot,t)-\bar u}{W^{1,\infty}(\T^d)}
  &\leq Ce^{-\kappa t},\qquad t\geq0,
  \label{eq:main-velocity-alignment}
\end{align}
where the density-weighted average velocity is
\begin{equation*}
  \bar u
  :=\frac{\int_{\T^d}\rho_0(x)u_0(x)\dd x}
          {\int_{\T^d}\rho_0(x)\dd x}.
\end{equation*}
\end{theorem}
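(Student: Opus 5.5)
The plan follows the abstract's strategy. We work with $G=\partial_1u-\Lambda^\alpha\rho$ and the potential gradient $\Gamma=\nabla\Lambda^{-\alpha}u$.

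\textbf{Step 1: local theory and reduction to a continuation criterion.} We take for granted the local well-posedness theory in $H^{m+\alpha}\times H^{m+1}$ together with the continuation criterion \eqref{eq:contcrit} from \cite{li2024global}. Global existence then reduces to proving, for every $T$, a bound $u\in L^\infty([0,T];C^\sigma)$ for some $\sigma>1-\alpha$.

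\textbf{Step 2: a priori bounds from the structure.} The maximum principle gives $\|u\|_{L^\infty}\le\|u_0\|_{L^\infty}$. The quantity $q=G/\rho$ satisfies the pure transport equation $\partial_tq+u\partial_1q=0$, so $\|q\|_{L^\infty}$ is conserved. Using the positivity of the nonlocal term in the equation for $\rho$ at its extrema, one obtains uniform two-sided bounds $0<\underline\rho\le\rho\le\bar\rho$, as in the one-dimensional theory.

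\textbf{Step 3: the mechanism at critical scale.} Under \eqref{eq:scaling}, both $\rho$ and $\Gamma$ have scaling-invariant amplitudes. From $\partial_t u+u\partial_1 u=-\Lambda^\alpha(\rho u)+u\Lambda^\alpha\rho$ and $\Gamma=\nabla\Lambda^{-\alpha}u$, we derive a transport equation for $\Gamma$. Its principal part is $\partial_t\Gamma+u\partial_1\Gamma=-\nabla(\rho u)+\dots$, and, after applying $\nabla\Lambda^{-\alpha}$, the leading term behaves like $-\rho\Lambda^\alpha\Gamma$ plus commutators. The density equation reads $\partial_t\rho+u\partial_1\rho=-\rho\Lambda^\alpha\rho-\rho G$, which is again dissipative with weight $\rho$.

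We propagate two moduli of continuity simultaneously: $\omega_1$ for $\rho$ and $\omega_2$ for $\Gamma$, both of Kiselev--Nazarov--Volberg type, with a small-scale linear part and a slowly growing (log-type) part at large scales. The breakthrough argument runs as in \cite{li2024global,kiselev2007global}. At the first time one modulus is touched at points $x,y$, the dissipation produces a negative term of size $\gtrsim\underline\rho\int$ (second-difference integrals of $\omega$), which scales like $\omega(\xi)\xi^{-\alpha}$.

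We then bound each competing term by quantities of the same scale:
\begin{itemize}
\item The transport term involves $u(x)-u(y)$, and $|u(x)-u(y)|\lesssim\int_0^\xi\omega_2(\eta)\eta^{\alpha-1}\dd\eta+\xi^{\alpha}\int_\xi^\infty\omega_2(\eta)\eta^{-1}\dd\eta$ because $u=\Lambda^\alpha\nabla^{-1}\Gamma$. Multiplied by $\omega'(\xi)$, this is again of size $\omega\xi^{-\alpha}$.
\item The commutator $[\Lambda^\alpha,\rho]$ contains products of increments of $\rho$ and $u$. These are controlled using both moduli.
\item The source term $\rho G$ is bounded by the $L^\infty$ bound on $q$ from Step 2.
\end{itemize}
Choosing constants so that the moduli are compatible rules out both breakthroughs. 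This yields $\Gamma\in C^{\omega_2}$, hence $\nabla\Lambda^{-\alpha}u$ is uniformly continuous. With the dissipation this upgrades to $u\in C^\sigma$ for $\sigma$ slightly greater than $1-\alpha$, possibly via a Silvestre-type Hölder bootstrap for drift-diffusion with bounded drift in critical scale. Step 1 then gives global existence.

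\textbf{Main obstacle.} The hardest point is the $\Gamma$ equation. Its nonlocal remainder contains $\nabla\Lambda^{-\alpha}$ applied to products, and the transverse components $\partial_k\Lambda^{-\alpha}u$ are not governed by \eqref{eq:relation}. One must show that the remainder in the $\Gamma$ equation is bounded by $\omega_1,\omega_2$ at the same scale with a small constant. I would first try to write this remainder as a bilinear commutator kernel $\int(\rho(z)-\rho(x))(u(z)-u(x))K\dd z$ and then estimate it split at scale $\xi$.

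\textbf{Step 4: asymptotics.} Once the solution is global, a Lyapunov argument gives exponential flocking. Let $A(t)=\max u-\min u$. Positivity of $\rho$ gives $\frac{d}{dt}A\le-c\underline\rho A$. Propagating the moduli with decaying amplitude (scaling $\omega$ by $e^{-\kappa t}$) yields $\|\nabla u\|_{L^\infty}\lesssim e^{-\kappa t}$. In the moving frame $x-\bar ut e_1$, which uses momentum conservation, we then have $\partial_t\rho=-\partial_1(\rho(u-\bar u))$. Its right-hand side is $O(e^{-\kappa t})$ in $L^\infty$, given Lipschitz bounds on $\rho$ from the regularity theory. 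Hence $\rho$ converges exponentially to a profile $\rho_\infty$, which is Lipschitz by the uniform bounds and has the same mass.
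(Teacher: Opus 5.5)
Your architecture matches the paper's: two KNV-type moduli for $\rho$ and $\Gamma$ at the same critical scale, a breakthrough analysis, continuation, and a moving-frame argument for $\rho$. However, two ingredients are missing, and without them the breakthrough estimates do not close.

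The first is an amplitude bound for $\Gamma$, which you never prove. The order of choices ``fix $\delta$, then take $\lambda$ small'' works only because a breakthrough can occur only at distances $\xi\le E\lambda$, with $E$ depending on $\delta$ and the amplitude bounds but not on $\lambda$. Without this confinement, already the zeroth-order piece $G(y)(\Gamma_k(x)-\Gamma_k(y))$ of $f(x)-f(y)$, of size $G_*\omega_2(\xi)$, beats the dissipation $c_D\underline\rho\,\omega_2(\xi)\xi^{-\alpha}$ at scales with $\xi^\alpha\gtrsim c_D\underline\rho/G_*$. For $\rho$ the amplitude bound comes from $\overline\rho$, but for $\Gamma$ nothing in your Step 2 helps. $\Gamma$ carries $1-\alpha$ derivatives of $u$, so $\norm{u}{L^\infty}$ is useless. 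The $\Gamma$ equation also has no direct maximum principle: its commutator forcing is controlled by $\Gamma$ in $L^p$ for $p<\infty$, but not in $L^\infty$. The paper spends \cref{sec:Gamma-bounds} on exactly this, in four steps:
\begin{itemize}
\item an $L^p$ bound from the endpoint commutator estimate of \cref{lem:commutator} plus C\'ordoba--C\'ordoba;
\item $L^p\to L^\infty$ smoothing for $\partial_tv+u\partial_1v+\rho\Lambda^\alpha v=0$ by fractional Moser iteration;
\item Duhamel's formula to pass the smoothing to the forced equation;
\item the decay $\norm{\Gamma(t)}{L^\infty}\lesssim e^{-\kappa t}$, via the Constantin--Vicol nonlinear lower bound with $\norm{\Lambda^{-\alpha}u}{L^\infty}\lesssim V(t)$.
\end{itemize}
This decay is not only for your Step 4. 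It is what allows one to propagate $\omega_2=e^{-\kappa t}\omega$ with a time-independent $E$. That in turn yields $\norm{\nabla u(t)}{L^\infty}\lesssim e^{-\kappa t}$ and the uniform-in-time Lipschitz bound on $\rho$ that Step 4 takes for granted.

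The second missing ingredient is the correct structure of the $\Gamma$ equation, and with it the need for Lipschitz control of $G$. The remainder is not a bilinear $\rho$--$u$ increment integral. Applying $\mathcal R_{k,\alpha}=\partial_k\Lambda^{-\alpha}$ to the velocity equation gives exactly
\[ \partial_t\Gamma_k+u\partial_1\Gamma_k+\rho\Lambda^\alpha\Gamma_k=-\comm{\mathcal R_{k,\alpha}}{u}(\partial_1u-\Lambda^\alpha\rho)=-\comm{\mathcal R_{k,\alpha}}{u}G. \]
So every derivative of $\rho$ cancels and the forcing is linear in $G$. Estimating $f(x)-f(y)$ at a contact pair then needs Lipschitz control of $G$ (the paper uses $\norm{\nabla G}{L^\infty}\lesssim\lambda^{-1}$), not only $\norm{G}{L^\infty}$.

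The same problem appears in your density breakthrough. With only $\norm{F}{L^\infty}$, the term $\rho(y)^2(F(x)-F(y))$ is $O(1)$. It cannot be absorbed by $D_\alpha(\xi)\gtrsim\delta\lambda^{-1-\alpha/2}\xi^{1-\alpha/2}$, which vanishes as $\xi\to0$.

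In several dimensions only $\partial_1F/\rho$ is transported. The transverse derivatives obey $(\partial_t+u\partial_1)\nabla F=-\nabla u\,\partial_1F$, so $\norm{\nabla F}{L^\infty}$ is controlled only through $\int_0^{t}\norm{\nabla u}{L^\infty}\dd s$. The paper bounds this on $[0,t_1]$ by $C\kappa^{-1}\delta\lambda^{-\alpha}$, using the $\Gamma$ modulus itself together with its exponential factor. This bootstrap is the genuinely multidimensional step, and it is absent from your plan.

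Two smaller points:
\begin{itemize}
\item Since $u-\mean u=-\sum_j\partial_j\Lambda^{\alpha-2}\Gamma_j$ has order $\alpha-1$, the velocity modulus is $\int_0^\xi\omega_2(\eta)\eta^{-\alpha}\dd\eta+\xi\int_\xi^\infty\omega_2(\eta)\eta^{-1-\alpha}\dd\eta$. Your formula has a divergent tail for the logarithmic modulus.
\item No Silvestre-type bootstrap is needed at the end. Because $\omega$ has finite slope at $0$, $\Gamma$ and hence $u$ are Lipschitz, which closes the Beale--Kato--Majda-type criterion directly.
\end{itemize}
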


Let us describe the main idea and strategy of the proof. The key is to introduce the potential gradient
\[
  \G:=\nabla\Lambda^{-\alpha}u,
\]
where $\Lambda^{-\alpha}$ annihilates the zero Fourier mode. There are two important reasons for this choice. First, $\G$ has scaling-invariant amplitude. Indeed, under the scaling \eqref{eq:scaling},
\[
  \rho_\lambda(x,t)=\rho(\lambda x,\lambda^\alpha t),
  \qquad
  \G_\lambda(x,t)=\G(\lambda x,\lambda^\alpha t).
\]
Thus $\rho$ and $\G$ live at the same critical scale, making them natural quantities for a simultaneous modulus-of-continuity argument. Second, $\G$ retains the crucial Euler--alignment structure encoded in \eqref{eq:relation}. Applying $\nabla\Lambda^{-\alpha}$ to the velocity equation and using
\[
  \partial_1u=\Lambda^\alpha\rho+G,
\]
one finds a cancellation of all derivatives of the density and obtains the transport-diffusion equation
\begin{equation}
  \partial_t\G+u\partial_1\G+\rho\Lambda^\alpha\G=f,
  \qquad
  f:=-\comm{\nabla\Lambda^{-\alpha}}{u}G.
  \label{eq:H-intro}
\end{equation}
This cancellation is essential: the leading part of the $\G$-equation is a density-weighted fractional diffusion, while the remaining forcing has the commutator structure above.

Motivated by the effectiveness of simultaneously propagating moduli of continuity in the critical case \cite{li2024global}, we seek to propagate compatible moduli for $\rho$ and $\G$. The main new difficulty is the commutator forcing $f$. Its control is carried out in two steps. 

We first establish an $L^\infty$ bound for $\G$, which is needed to initiate the modulus argument. Since the commutator prevents a direct application of the maximum principle, we begin with the endpoint commutator estimate
\[
  \norm{f}{L^p}
  \leq C_{d,\alpha,p}
  \norm{G}{L^\infty}\norm{\G}{L^p},
  \qquad 1<p<\infty.
\]
Together with the uniform bound on $G$ and the C\'ordoba--C\'ordoba inequality
\cite{cordoba2003pointwise}, this closes the finite-$p$ estimate without requiring any control of density gradients. Since the commutator estimate does not extend directly to $p=\infty$, we derive an $L^p\to L^\infty$ smoothing estimate for the corresponding homogeneous equation by fractional Moser iteration. Duhamel's formula then yields an $L^\infty$ bound for $\G$. Combining velocity alignment with a nonlinear dissipation estimate further gives exponential decay of $\norm{\G(t)}{L^\infty}$.

We then propagate a fixed modulus for $\rho$ and an exponentially decreasing modulus for $\G$,
\[
  \omega_1(\xi)=\omega_\lambda^\delta(\xi),
  \qquad
  \omega_2(\xi,t)=e^{-\kappa t}\omega_\lambda^\delta(\xi),
\]
where $\omega_\lambda^\delta$ is the family defined in
\eqref{eq:MOC-family}. The common critical scaling of $\rho$ and $\G$ is crucial here: it allows us first to choose the amplitude $\delta$ sufficiently small so that the transport and variable-coefficient contributions can be absorbed by the dissipation, and then to choose the spatial scale $\lambda$ sufficiently small to accommodate arbitrary initial data. The main work in the $\G$ breakthrough scenario is to estimate the difference
\[
  f(x,t)-f(y,t)
\]
at a possible contact pair. By exploiting both the commutator structure and the simultaneous moduli for $\rho$ and $\G$, we show that this difference has a modulus compatible with the dissipative term. This allows the dissipation to rule out both the density and $\G$ breakthrough scenarios.

The propagated moduli yield
\[
  \sup_{t\geq0}\norm{\nabla\rho(t)}{L^\infty}<\infty,
  \qquad
  [\G(t)]_{\mathrm{Lip}}\lesssim e^{-\kappa t},
\]
and, through the reconstruction of $u$ from $\G$, also
\[
  \norm{\nabla u(t)}{L^\infty}\lesssim e^{-\kappa t}.
\]
This is substantially stronger than the conditional H\"older control required in \eqref{eq:contcrit}: rather than assuming that $u$ remains in $C^\sigma$ for some $\sigma>1-\alpha$, the argument dynamically produces a global Lipschitz bound for the velocity. These estimates close the continuation criterion and yield global regularity.

\bigskip
The rest of the paper is organized as follows. \cref{sec:setup} collects the local well-posedness theory and continuation criterion, together with the basic a priori bounds and the endpoint commutator estimate. \cref{sec:Gamma-bounds} derives the evolution equation for the potential gradient $\G$ and establishes its $L^p$ and $L^\infty$ bounds, followed by exponential decay. \cref{sec:MOC-Gamma} develops the simultaneous modulus-of-continuity argument for $\rho$ and $\G$, including the breakthrough analysis and the key estimate on the commutator forcing. \cref{sec:global} closes the continuation argument and proves exponential alignment of the velocity and convergence of the density to a traveling profile, completing the proof of \cref{thm:main}. Finally, \cref{app:periodic-commutator} provides a proof of the periodic endpoint commutator estimate.

\subsection*{Notations.}
We work on $\T^d=(\R/2\pi\Z)^d$ with its geodesic distance
$d_{\T^d}$.
Our Fourier convention is
$\widehat f(\xi)=(2\pi)^{-d}\int_{\T^d}f(x)e^{-ix\cdot\xi}\dd x$,
$\xi\in\Z^d$, and $\mean f:=\widehat f(0)$.
For $s\neq0$, the operator $\Lambda^s=(-\Delta)^{s/2}$ is defined by
\begin{equation}
  \widehat{\Lambda^sf}(\xi)=
  \begin{cases}
    |\xi|^s\widehat f(\xi),&\xi\neq0,\\
    0,&\xi=0.
  \end{cases}
  \label{eq:negative-Lambda-definition}
\end{equation}
In particular, negative powers and the Riesz transforms
$R_k:=\partial_k\Lambda^{-1}$ annihilate constants.

Identifying $f$ with its periodic extension to $\R^d$, we write
\begin{equation}
  \Lambda^\alpha f(x)
  =c_{d,\alpha}\operatorname{p.v.}\int_{\R^d}
    \frac{f(x)-f(y)}{|x-y|^{d+\alpha}}\dd y,
  \qquad 0<\alpha<2,
  \label{eq:fractional-laplacian}
\end{equation}
where $c_{d,\alpha}>0$ is normalized so that $\Lambda^\alpha$ has
Fourier symbol $|\xi|^\alpha$.
For $0<\alpha<1$ and $f\in C^1(\T^d)$, the integral is absolutely
convergent.  All integrals over $\R^d$ involving periodic functions
are understood in this sense.
The phrase \emph{fixed periodic normalization} refers to these
choices of torus, Fourier transform, and kernel.  Constants arising
from this normalization depend only on $d,\alpha$ and may change
from line to line.

\section{Preliminaries}
\label{sec:setup}
\label{sec:preliminaries}

\subsection{Local well-posedness theory and a priori bounds}
We collect the known results used below, following \cite[Section~2]{li2024global}, and state them without proof.

First, we state the local well-posedness theory and regularity criterion for smooth solutions of the system \eqref{eq:main}.
\begin{proposition}\label{prop:LWP}
Consider the system \eqref{eq:main} with initial data satisfying \eqref{eq:data}.  Then there exist a
maximal time $T_*\in(0,\infty]$ and a unique non-vacuum classical
solution $(\rho,u)$ on $[0,T_*)$ such that
\begin{equation}
  \begin{aligned}
    \rho
    &\in
    C_{\mathrm w}\bigl([0,T_*);H^{m+\alpha}(\T^d)\bigr),
    \\
    u
    &\in
    C_{\mathrm w}\bigl([0,T_*);H^{m+1}(\T^d)\bigr)
    \cap
    L^2_{\mathrm{loc}}
    \bigl([0,T_*);\dot H^{m+1+\alpha/2}(\T^d)\bigr).
  \end{aligned}
  \label{eq:baseline-solution-class}
\end{equation}
Here $C_{\mathrm w}$ denotes weak continuity in time, and $T_*$ is the
maximal existence time in the solution class
\eqref{eq:baseline-solution-class}.
Moreover, if $T_*<\infty$, then
\begin{equation}
  \sup_{0\leq t<T_*}
  \left(
    \norm{\nabla\rho(\cdot,t)}{L^\infty(\T^d)}
    +
    \norm{\nabla u(\cdot,t)}{L^\infty(\T^d)}
  \right)
  =
  \infty.
  \label{eq:local-blowup}
\end{equation}
\end{proposition}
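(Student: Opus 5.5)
The plan is to follow the standard energy method for hyperbolic–parabolic systems, adapted to the structure used in \cite[Section~2]{li2024global}. The natural unknowns are $(\rho,u)$ together with the conserved quantity $G=\partial_1u-\Lambda^\alpha\rho$. The regularity levels $H^{m+\alpha}$ for $\rho$ and $H^{m+1}$ for $u$ are chosen so that $G\in H^m$. This means $G$ is propagated by the pure transport law $\partial_tG+\partial_1(uG)=0$ without loss of derivatives. Conversely, $\rho$ can be recovered from $u$ and $G$ through $\Lambda^\alpha\rho=\partial_1u-G$.

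\textbf{Step 1 (a priori estimates).} For a smooth non-vacuum solution, the continuity equation gives $\rho(t)\geq \inf\rho_0\exp\bigl(-\int_0^t\|\partial_1u\|_{L^\infty}\bigr)>0$ as long as $\nabla u$ stays bounded. I would then perform $\dot H^{m+1}$ energy estimates on the velocity equation, written in the form
\[
\partial_tu+u\partial_1u=-\rho\Lambda^\alpha u-\bigl([\Lambda^\alpha,\rho]u-u\Lambda^\alpha\rho\bigr).
\]
Since $\rho\geq\rhom>0$, the term $\int \Lambda^{m+1}u\,\rho\Lambda^{\alpha}\Lambda^{m+1}u$ yields the coercive dissipation $\tfrac{\rhom}{2}\|u\|_{\dot H^{m+1+\alpha/2}}^2$, up to commutator errors. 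These errors are handled with Kato–Ponce type commutator estimates.

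The commutator terms cost $\|\rho\|_{H^{m+\alpha}}$, which is controlled through the $G$ relation: $\|\rho\|_{\dot H^{m+\alpha}}\lesssim\|u\|_{H^{m+1}}+\|G\|_{H^m}$. In turn, $\|G\|_{H^m}$ obeys a transport estimate $\frac{d}{dt}\|G\|_{H^m}\lesssim(\|\nabla u\|_{L^\infty}+\|G\|_{L^\infty})\|G\|_{H^m}+\|G\|_{L^\infty}\|u\|_{H^{m+1}}$. The leftover terms involving $\|u\|_{H^{m+1}}$ are absorbed into the dissipation by interpolation.

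The result is a closed inequality for $Y=\|u\|_{H^{m+1}}^2+\|G\|_{H^m}^2+\|\rho\|_{L^2}^2$ of the form
\[
Y'\leq C\bigl(1+\|\nabla\rho\|_{L^\infty}+\|\nabla u\|_{L^\infty}\bigr)Y,
\]
with $C$ also depending on $\rhom^{-1}$. Since $m>d/2+1$, the Sobolev embedding controls the right-hand side by $Y$, which gives local-in-time bounds.

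\textbf{Step 2 (existence, uniqueness, weak continuity).} I would construct solutions by a regularized scheme: Friedrichs mollification, or adding $-\varepsilon\Delta$, while preserving the unidirectional structure. The uniform bounds of Step 1 hold on a common time interval, and compactness (Aubin–Lions) passes to the limit. Uniqueness follows from an $L^2$-level difference estimate, where a low-norm estimate on $(\rho_1-\rho_2,u_1-u_2)$ closes thanks to the high-norm bounds. Weak continuity in time follows from boundedness in the top norm together with strong continuity in lower norms. The $L^2_t\dot H^{m+1+\alpha/2}$ bound comes directly from the dissipation term.

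\textbf{Step 3 (continuation criterion).} Let $T_*$ be maximal, and suppose \eqref{eq:local-blowup} fails, so $\|\nabla\rho\|_{L^\infty}+\|\nabla u\|_{L^\infty}\leq M$ on $[0,T_*)$. Then $\rho$ stays bounded below on $[0,T_*)$. The linear-in-$Y$ inequality from Step 1, now with coefficients involving only these Lipschitz norms and $\|G\|_{L^\infty}\lesssim\|G_0\|_{L^\infty}e^{CMt}$, gives a Grönwall bound on $Y$ up to $T_*$, so the solution extends, a contradiction.

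\textbf{Main obstacle.} The delicate point is making the commutator estimates in Step 1 tame, meaning linear in the top norm with coefficients depending only on $\|\nabla\rho\|_{L^\infty}$, $\|\nabla u\|_{L^\infty}$, $\|G\|_{L^\infty}$ and $\rhom^{-1}$. This requires careful fractional Leibniz estimates for $[\Lambda^{m+1},u\partial_1]$ and for the alignment commutator with $0<\alpha<1$. I would follow \cite{li2024global} there rather than redo it, since the proposition is stated as quoted from that reference.
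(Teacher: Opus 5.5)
The paper gives no proof of this proposition. It imports it from \cite[Theorem~1.1]{lear2021unidirectional}, not from \cite{li2024global}. Your architecture is the standard one: $u$ in $H^{m+1}$, $G$ in $H^m$ by transport, $\norm{\rho}{\dot H^{m+\alpha}}\lesssim\norm{u}{\dot H^{m+1}}+\norm{G}{\dot H^m}$ via \eqref{eq:relation}, then Gr\"onwall with Lipschitz coefficients. However, Step~1 as you set it up does not close. You split the force as $-\rho\Lambda^\alpha u-B(\rho,u)$ with $B(\rho,u)=\comm{\Lambda^\alpha}{\rho}u-u\Lambda^\alpha\rho$, and you assert that the commutator errors cost only $\norm{\rho}{H^{m+\alpha}}$. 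They do not. The commutator $\comm{\Lambda^{m+1}}{\rho}\Lambda^\alpha u$ contains $(\Lambda^{m+1}\rho)\Lambda^\alpha u$, whose Kato--Ponce bound is $\norm{\rho}{\dot H^{m+1}}\norm{\Lambda^\alpha u}{L^\infty}$. That is $1-\alpha$ derivatives more than the class provides. Pairing it with $\Lambda^{m+1}u$ and moving those $1-\alpha$ derivatives onto $u$ costs $\norm{u}{\dot H^{m+2-\alpha}}$, which the dissipation $L^2_t\dot H^{m+1+\alpha/2}$ absorbs only when $\alpha\geq2/3$. Since $B(g,u)=\comm{\Lambda^\alpha}{u}g-g\Lambda^\alpha u$, the same bad term appears with the opposite sign in the top-order part of $B(\rho,u)$. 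So neither piece is admissible on its own.

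The missing idea is to exploit this cancellation by never splitting. The force is exactly $-\Lambda^\alpha(\rho u)+u\Lambda^\alpha\rho=-\comm{\Lambda^\alpha}{u}\rho$, and for $|\beta|=m+1$ one has $\partial^\beta\comm{\Lambda^\alpha}{u}\rho=\sum_{\gamma\leq\beta}\binom{\beta}{\gamma}\comm{\Lambda^\alpha}{\partial^\gamma u}\partial^{\beta-\gamma}\rho$. The three kinds of terms are handled as follows:
\begin{itemize}
\item The term with all derivatives on $\rho$, $\comm{\Lambda^\alpha}{u}\partial^\beta\rho$, is a Calder\'on-type commutator of order $\alpha-1$. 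It costs $\norm{\nabla u}{L^\infty}\norm{\rho}{\dot H^{m+\alpha}}$ up to lower order.
\item The term with all derivatives on $u$ equals $\rho\Lambda^\alpha\partial^\beta u+B(\partial^\beta u,\rho)$. This is the dissipation plus an error bounded by $C(\norm{\rho}{W^{1,\infty}}+\norm{\Lambda^\alpha\rho}{L^\infty})\norm{u}{\dot H^{m+1}}$, where $\norm{\Lambda^\alpha\rho}{L^\infty}\leq\norm{\partial_1u}{L^\infty}+\norm{G}{L^\infty}$.
\item The intermediate terms are tame in the Lipschitz norms if you interpolate $\rho$ between a $C^\alpha$-type norm and $\dot H^{m+\alpha}$, rather than from $W^{1,\infty}$. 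This matches the $1-\alpha$ regularity gap between $\rho$ and $u$.
\end{itemize}

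Your uniqueness step has the analogous defect. A plain $L^2$ estimate for $(\rho_1-\rho_2,u_1-u_2)$ meets $\rho_2\partial_1(u_1-u_2)$ in the continuity equation. $L^2$ control of $u_1-u_2$ plus $\alpha/2$ derivatives of dissipation cannot handle that term. The estimate closes if you write the density equation as $\partial_t\rho+u\partial_1\rho=-\rho\Lambda^\alpha\rho-F\rho^2$ and include $\norm{F_1-F_2}{L^2}$ in the energy. This works because $F_1-F_2$ is transported by $u_1$ with the zeroth-order source $-(u_1-u_2)\partial_1F_2$.
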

The local theory and continuation criterion are given in
\cite[Theorem~1.1]{lear2021unidirectional}.

Next, we state some a priori bounds.
Denote the auxiliary quantity
\begin{equation*}
  G:=\partial_1u-\Lambda^\alpha\rho.
\end{equation*}
It satisfies the following conservation law:
\begin{equation*}
  \partial_tG+\partial_1(Gu)=0.
\end{equation*}
Define $F:=G/\rho$. Then $F$ satisfies the transport equation
\begin{equation}\label{eq:F-transport}
  (\partial_t+u\partial_1)F=0.
\end{equation}
Consequently, we have 
\begin{equation*}
  \norm{F(\cdot,t)}{L^\infty}=\norm{F_0}{L^\infty},
\end{equation*}
where $F_0:=F(\cdot,0)$.

We have the following a priori bounds on the density: there exist constants
$0<\underline\rho\leq\overline\rho<\infty$,
depending only on the initial data, $d,\alpha$, and the periodic
normalization, such that
\begin{equation}
  \underline\rho \leq \rho(x,t) \leq \overline\rho,
  \qquad (x,t)\in\T^d\times[0,T_*).
  \label{eq:uniform-density-bounds}
\end{equation}
See, e.g., \cite[Proposition~2.2]{li2024global}.
This then implies 
\begin{equation}\label{eq:G-Linf}
  \norm{G(\cdot,t)}{L^\infty} = \norm{\rho(\cdot,t) F(\cdot,t)}{L^\infty} \leq \overline\rho \norm{F_0}{L^\infty}.
\end{equation}

Finally, the velocity satisfies the maximum principle
$\norm{u(t)}{L^\infty}\leq\norm{u_0}{L^\infty}$ and the following
flocking estimate; see \cite[Lemma~2.3]{li2024global}
and \cite{tadmor2014critical}.  Define
\[
 V(t):=\sup_{x,y\in\T^d}|u(x,t)-u(y,t)|,
 \qquad V_0:=V(0).
\]
Then there exists $\kappa>0$, depending only on $d,\alpha$ and the
initial mass, such that
\begin{equation}\label{eq:velocity-alignment}
  V(t)\leq V_0 e^{-\kappa t},\qquad 0\leq t<T_*.
\end{equation}

\subsection{The endpoint commutator}
For $k\in\{1,\ldots,d\}$, write
\[
  \mathcal R_{k,\alpha}:=\partial_k\Lambda^{-\alpha}.
\]
We shall use the following Kato--Ponce type commutator estimate to control the forcing term $f$ in \eqref{eq:H-intro}.
\begin{lemma}
\label{lem:commutator}
Let $0<\alpha<1$ and $1<p<\infty$.  For every
$k\in\{1,\ldots,d\}$ and every $v,h\in C^1(\T^d)$,
\begin{equation}
  \norm{\comm{\mathcal R_{k,\alpha}}{v}h}{L^p(\T^d)}
  \leq
  C_{d,\alpha,p}
  \norm{\nabla\Lambda^{-\alpha}v}{L^p(\T^d)}
  \norm{h}{L^\infty(\T^d)}.
  \label{eq:commutator}
\end{equation}
The constant also depends on the fixed periodic normalization.
\end{lemma}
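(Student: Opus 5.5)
Set $\beta:=1-\alpha\in(0,1)$. Then $\mathcal R_{k,\alpha}=\partial_k\Lambda^{-\alpha}$ is a Calderón--Zygmund type operator of order $\beta$, and the claim has the form of a Calder\'on commutator estimate:
\[
  \norm{[T_\beta,v]h}{L^p}\lesssim\norm{\Lambda^\beta v}{L^p}\norm{h}{L^\infty},
\]
where $\Lambda^\beta v$ is comparable in $L^p$ to $\nabla\Lambda^{-\alpha}v$ by boundedness of the Riesz transforms on $L^p$. The endpoint feature is that $h$ sits in $L^\infty$, so no derivatives fall on $h$. I will work on $\R^d$ first and transfer to $\T^d$ at the end. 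The transfer uses the periodic kernel, which equals the $\R^d$ kernel plus a smooth remainder near the origin. Since the operators annihilate the zero mode, we may also replace $v$ by $v-\mean v$.

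The main route is via the kernel. The kernel of $\mathcal R_{k,\alpha}$ is $K(z)=c\,z_k|z|^{-d-2+\alpha}$, which is odd and homogeneous of degree $-d-\beta$. Hence
\[
  [\mathcal R_{k,\alpha},v]h(x)=\operatorname{p.v.}\int K(x-y)\bigl(v(y)-v(x)\bigr)h(y)\dd y,
\]
and this is bounded pointwise by $\norm h{L^\infty}\int|K(x-y)|\,|v(x)-v(y)|\dd y$. This last integral is essentially the Gagliardo-type quantity $\int|v(x)-v(x+z)|\,|z|^{-d-\beta}\dd z$. A pure absolute-value bound loses too much, since that integral does not give exactly $\norm{\Lambda^\beta v}{L^p}$ at $p$ finite. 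So instead I would use a Littlewood--Paley (paraproduct) decomposition of $v h$. Split
\[
  [\mathcal R,v]h=\sum_j\bigl([\mathcal R,S_{j-2}v]\Delta_jh+[\mathcal R,\Delta_jv]S_{j-2}h\bigr)+\text{high--high}.
\]

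The three pieces are handled as follows.
\begin{itemize}
\item Low $v$, high $h$: the commutator gains one derivative on $S_{j-2}v$. The kernel computation gives size $2^{j(\beta-1)}\norm{\nabla S_{j-2}v}{}$, and this is controlled by $\sum_{l<j}2^{(l-j)(1-\beta)}2^{l\beta}|\Delta_lv|$ times the maximal function of $h$. Summing in $\ell^2$ gives a square-function bound by $\norm{\Lambda^\beta v}{L^p}\norm h{L^\infty}$. Here the estimate cannot rely on $\sum_j\Delta_jh$ being square summable in $L^\infty$. I would therefore instead write this piece as a bilinear form with symbol $\sigma(\xi,\eta)$, smooth and homogeneous away from degenerate cones, and invoke Coifman--Meyer with $v$ replaced by $\Lambda^\beta v$ and $h\in L^\infty$. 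This is legitimate since $L^p\times L^\infty\to L^p$ holds for Coifman--Meyer for $1<p<\infty$.
\item High $v$, low $h$: both $\mathcal R(\Delta_jv\,S_{j-2}h)$ and $\Delta_jv\,\mathcal R S_{j-2}h$ cost $2^{j\beta}$ on $\Delta_jv$. This is again a Coifman--Meyer symbol acting on $(\Lambda^\beta v,h)$.
\item High--high: the output frequency is $\lesssim 2^j$, so $\mathcal R$ costs at most $2^{j\beta}$, again placed on $\Delta_jv$, with a symbol that is fine because $\beta>0$ gives summability. The term $v\,\mathcal Rh$ in this range is treated similarly.
\end{itemize}

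The main obstacle is verifying that the combined symbol
\[
  \bigl(m(\xi+\eta)-m(\eta)\bigr)|\xi|^{-\beta}\chi(\xi,\eta),\qquad m(\zeta)=i\zeta_k|\zeta|^{-\alpha},
\]
satisfies Coifman--Meyer bounds in the region $|\xi|\ll|\eta|$. I would check this by the mean value theorem: the difference is $\approx|\xi||\eta|^{-\alpha}$, and with $|\xi|^{1-\beta}|\eta|^{-\alpha}=|\xi|^{\alpha}|\eta|^{-\alpha}\le1$, derivatives behave correctly. This is fine because $\alpha>0$, which is where $0<\alpha<1$ is used. Finally, periodization is done by transference (de Leeuw/Fourier multiplier transference on $\Z^d$), or directly, using that the periodic symbols restrict from smooth homogeneous ones and the zero modes are annihilated.
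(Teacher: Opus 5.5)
\textbf{The route differs from the paper's, and its central step fails as stated.} The paper does not reprove the Euclidean endpoint estimate. It takes it from \cite{li2019kato} (with $s=s_1=1-\alpha$, $s_2=0$) and spends the proof on transference to $\T^d$:
it cuts off with $\chi_L=\chi(\cdot/L)$;
it bounds $\norm{\comm{B}{\chi_L}g}{L^p(\R^d)}\le C\norm{g}{L^\infty}L^{d/p-s}$ using only the kernel decay $|z|^{-d-s}$;
it uses the identity $\chi_L^2\comm{A}{v}h=\comm{A_{\mathrm E}}{\chi_Lv}(\chi_Lh)-\comm{A_{\mathrm E}}{\chi_L^2}(vh)+\chi_Lv\comm{A_{\mathrm E}}{\chi_L}h$;
and it lets $L=N\pi\to\infty$ after dividing by $N^{d/p}$.
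Reproving the Euclidean estimate is legitimate. However, the step you identify as the main obstacle is not correct. In the region $|\xi|\ll|\eta|$, the symbol $\sigma(\xi,\eta)=(m(\xi+\eta)-m(\eta))|\xi|^{-\beta}$ equals, to leading order, $\nabla m(\eta)\cdot\xi\,|\xi|^{-\beta}$. This is homogeneous of degree $1-\beta=\alpha$ in $\xi$. Its size is $\lesssim(|\xi|/|\eta|)^{\alpha}\le1$, as you say. But Coifman--Meyer requires $|\partial_\xi^a\partial_\eta^b\sigma|\lesssim(|\xi|+|\eta|)^{-|a|-|b|}$, and here $|\partial_\xi\sigma|$ is of order $|\eta|^{-\alpha}|\xi|^{\alpha-1}=|\eta|^{-1}(|\eta|/|\xi|)^{1-\alpha}$. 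That is unbounded relative to $|\eta|^{-1}$ as $|\xi|/|\eta|\to0$. The mean value theorem controls only the size, not the derivatives. The symbol is merely H\"older across the cone $\{\xi=0\}$, and such symbols are not covered by Coifman--Meyer. The same failure occurs in your second bullet for $\Delta_jv\,\mathcal RS_{j-2}h$: its symbol $m(\eta)|\xi|^{-\beta}$ has $|\partial_\eta\sigma|$ of order $|\xi|^{-1}(|\xi|/|\eta|)^{\alpha}$ for $|\eta|\ll|\xi|$. In the high--high piece, the term $v\,\mathcal Rh$ is genuinely Coifman--Meyer, since $|\xi|\sim|\eta|$ stays away from $\eta=0$. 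Your summability treatment of $\mathcal R(\Delta_jv\,\Delta_{j'}h)$ using $\beta>0$ is right.

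\textbf{The repair is the square-function argument you abandoned, and your reason for abandoning it was mistaken.} Each piece $\comm{\mathcal R}{S_{j-2}v}\Delta_jh$ has Fourier support in an annulus $|\zeta|\sim2^j$, so the sum is controlled in $L^p$ by the $\ell^2_j$ square function. Pointwise, $|\comm{\mathcal R}{S_{j-2}v}\Delta_jh|\lesssim\norm{h}{L^\infty}\sum_{l<j-2}2^{(l-j)\alpha}\mathcal M(\Delta_l\Lambda^\beta v)$. Here $\mathcal M$ is a maximal operator, e.g.\ $(M|\cdot|^r)^{1/r}$ with $r<\min\{p,2\}$. It must act on the $v$-factor, not on $h$, because the increment of $S_{j-2}v$ is evaluated off the point $x$. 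Young's inequality in $\ell^2$ (using $\alpha>0$) and Fefferman--Stein then give $\norm{h}{L^\infty}\norm{\Lambda^\beta v}{L^p}$. This uses only $\sup_j\norm{\Delta_jh}{L^\infty}\lesssim\norm{h}{L^\infty}$, never square summability of $\Delta_jh$. Equivalently, decompose $\sigma$ dyadically in the ratio $|\xi|/|\eta|\sim2^{-k}$; each piece is $2^{-k\alpha}$ times a uniformly Coifman--Meyer symbol. Similarly, $\norm{\mathcal RS_{j-2}h}{L^\infty}\lesssim2^{j\beta}\norm{h}{L^\infty}$ (from $\beta>0$) handles $\sum_j\Delta_jv\,\mathcal RS_{j-2}h$ by the same annular square-function bound.

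\textbf{The periodization step is only gestured at.} A bilinear de Leeuw theorem with an $L^\infty$ input would have to be cited precisely. Your symbol is also discontinuous at $(0,0)$, so you would first need to restrict to mean-zero $v$. The paper's cutoff argument needs only the decay of the Euclidean kernels, together with $s=1-\alpha>0$, which makes the error $CN^{-s}\norm{v}{L^\infty}\norm{h}{L^\infty}$ vanish.
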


The estimate \eqref{eq:commutator} is an endpoint commutator estimate in the sense that the second factor is measured in  $L^\infty$. Its Euclidean counterpart on $\R^d$ follows, for example, from \cite[Corollary~1.4(2)]{li2019kato}, with $s=s_1=1-\alpha$ and $s_2=0$. The corresponding estimate on the periodic domain $\T^d$ can be obtained by a standard transference/localization argument; see also \cite{benyi2024fractional} for periodic fractional Leibniz and related commutator estimates. For completeness, we provide a proof in \cref{app:periodic-commutator}.

\section{A priori bounds for the potential gradient}
\label{sec:Gamma-bounds}

We introduce the potential $\phi$ and its gradient $\G$:
\[
  \phi := \Lambda^{-\alpha}(u-\mean u),
  \quad\text{and}\quad
  \G := \nabla\phi = \nabla\Lambda^{-\alpha}\big(u-\mean u\big).
\]
On periodic lifts, with the period rescaled, $\G$ satisfies the same critical scaling as $\rho$:
\[
 \G_\lambda(x,t) = \G(\lambda x, \lambda^\alpha t).
\]
In this section, we establish the $L^\infty$ bound for $\G$. The estimate will be used in the modulus-of-continuity argument in the next section.

Throughout this section, we use the structural relation
\begin{equation}
  \partial_1u=\Lambda^\alpha\rho+G,
  \label{eq:Gamma-structural}
\end{equation}
together with the a priori bounds obtained in \cref{sec:preliminaries},
\begin{equation*}
  0<\underline\rho\leq \rho(x,t)\leq\overline\rho,
  \qquad
  \norm{G(t)}{L^\infty}\leq G_*,
  \qquad
  0\leq t<T_*.
\end{equation*}
Here and below, constants may depend on the fixed periodic
normalization.  We take $G_*:=\overline\rho\norm{F_0}{L^\infty}$,
as provided by \eqref{eq:G-Linf}.

\subsection{The potential-gradient equation}

For $k\in\{1,\ldots,d\}$, denote
\begin{equation*}
  \G :=(\Gamma_1,\ldots,\Gamma_d),
  \quad\text{where}\quad
  \Gamma_k :=\mathcal R_{k,\alpha}u,
  \quad\text{and}\quad
  \mathcal R_{k,\alpha} :=\partial_k\Lambda^{-\alpha}.
\end{equation*}
We first derive the equation satisfied by
$\G$.  By \eqref{eq:Gamma-structural}, the velocity
equation can be rewritten as
\[
  \partial_tu
  =-u\partial_1u-\Lambda^\alpha(\rho u) + u\Lambda^\alpha\rho
  =-\Lambda^\alpha(\rho u)-uG.
\]
Applying $\mathcal R_{k,\alpha}$ and using
$\mathcal R_{k,\alpha}\Lambda^\alpha=\partial_k$, we obtain
\begin{equation*}
  \partial_t\Gamma_k
  =-\partial_k(\rho u)-\mathcal R_{k,\alpha}(uG).
\end{equation*}
Moreover,
\[
  \Lambda^\alpha\Gamma_k=\partial_ku,
  \quad\text{and}\quad
  \partial_1\Gamma_k =\partial_k\rho+\mathcal R_{k,\alpha}G.
\]
Consequently,
\begin{align*}
  \partial_t\Gamma_k +u\partial_1\Gamma_k +\rho\Lambda^\alpha\Gamma_k
  = &
  -\partial_k(\rho u) -\mathcal R_{k,\alpha}(uG) +u\bigl(\partial_k\rho+\mathcal R_{k,\alpha}G\bigr) +\rho\partial_ku\\
  = &
  u\mathcal R_{k,\alpha}G -\mathcal R_{k,\alpha}(uG)
  = -\comm{\mathcal R_{k,\alpha}}{u}G.
\end{align*}
Thus, $\G$ satisfies the equation
\begin{equation}\label{eq:Gamma-equation}
  \partial_t\G +u\partial_1\G +\rho\Lambda^\alpha\G =f, 
\end{equation}
where the forcing term has a commutator form
\begin{equation*}
    f=(f_1,\ldots,f_d),\qquad
	f_k :=-\comm{\mathcal R_{k,\alpha}}{u}G.
\end{equation*}

\subsection{The \texorpdfstring{$L^p$}{L-p} bound}

We first obtain an $L^p$ bound for $\G$ for every finite $p$.

\begin{proposition}\label{prop:Gamma-Lp}
Let $2\leq p<\infty$.  Then
\begin{equation}
  \norm{\G(t)}{L^p}
  \leq
  e^{C_pG_*t}
  \norm{\G_0}{L^p},
  \qquad
  0\leq t<T_*.
  \label{eq:Gamma-Lp}
\end{equation}
\end{proposition}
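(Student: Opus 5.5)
We run an $L^p$ energy estimate on each component of \eqref{eq:Gamma-equation} and close it with Gr\"onwall. Fix $p\ge2$. We multiply the equation for $\Gamma_k$ by $|\Gamma_k|^{p-2}\Gamma_k$, integrate over $\T^d$, and sum over $k$. Working componentwise with $\sum_k\norm{\Gamma_k}{L^p}^p$ is equivalent to $\norm{\G}{L^p}^p$ up to dimensional constants. A cleaner route is to use $|\G|^{p-2}\G$ for the vector; we adopt that, since the C\'ordoba--C\'ordoba argument also works for the vector field.

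\textbf{Transport term.} Integration by parts gives
\[
\int u\,\partial_1\G\cdot|\G|^{p-2}\G \dd x=\frac1p\int u\,\partial_1|\G|^p\dd x=-\frac1p\int(\partial_1u)|\G|^p\dd x .
\]
This term is dangerous, because $\partial_1u=\Lambda^\alpha\rho+G$ and $\Lambda^\alpha\rho$ is not a priori bounded. The plan is to combine it with the dissipation term. By the pointwise C\'ordoba--C\'ordoba inequality \cite{cordoba2003pointwise}, in its vector form
\[
|\G|^{p-2}\G\cdot\Lambda^\alpha\G\ \ge\ \tfrac1p\Lambda^\alpha(|\G|^p),
\]
together with $\rho>0$, we get
\[
\int\rho\,|\G|^{p-2}\G\cdot\Lambda^\alpha\G\dd x\ \ge\ \frac1p\int\rho\,\Lambda^\alpha(|\G|^p)\dd x=\frac1p\int(\Lambda^\alpha\rho)\,|\G|^p\dd x ,
\]
using self-adjointness of $\Lambda^\alpha$. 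The transport contribution therefore combines with this to
\[
\frac1p\int(\partial_1u-\Lambda^\alpha\rho)|\G|^p\dd x=\frac1p\int G\,|\G|^p\dd x .
\]
In short,
\[
\frac1p\frac{d}{dt}\norm{\G}{L^p}^p\le \frac1p\int G|\G|^p\dd x+\int f\cdot|\G|^{p-2}\G\dd x .
\]
This exact cancellation of $\Lambda^\alpha\rho$ is the key point, and it is why no control on density gradients is needed.

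\textbf{Forcing term.} The first term on the right is bounded by $\frac{G_*}{p}\norm{\G}{L^p}^p$. For the second, H\"older's inequality gives the bound $\norm{f}{L^p}\norm{\G}{L^p}^{p-1}$. Applying Lemma~\ref{lem:commutator} with $v=u$ and $h=G$, and noting $\nabla\Lambda^{-\alpha}u=\G$ because $\Lambda^{-\alpha}$ kills the mean, we obtain
\[
\norm{f}{L^p}\le C_{d,\alpha,p}\norm{\G}{L^p}\norm{G}{L^\infty}\le C G_*\norm{\G}{L^p}.
\]
Hence $\frac{d}{dt}\norm{\G}{L^p}\le C_pG_*\norm{\G}{L^p}$, and Gr\"onwall's inequality yields \eqref{eq:Gamma-Lp}.

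\textbf{Justification.} The computation requires enough regularity. For $t<T_*$, the solution class of Proposition~\ref{prop:LWP} gives $u\in H^{m+1}$, so $\G\in H^{m+1-\alpha}\subset C^1$, and $\rho\in C^1$ with $\rho\ge\underline\rho$. The pointwise inequality and the integrations by parts are then legitimate. The time derivative should be handled in integrated form, or with $\norm{\G}{L^p}^p$ shown to be absolutely continuous using the equation, to accommodate the weak-in-time continuity. The main conceptual step is the pairing of the C\'ordoba--C\'ordoba lower bound with $\partial_1u$ to produce $G$. The only technical care is applying the C\'ordoba--C\'ordoba inequality to the vector $\G$ weighted by $\rho$. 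If the vector form is troublesome, we will run the argument componentwise on each $\Gamma_k$ (scalar C\'ordoba--C\'ordoba) and sum.
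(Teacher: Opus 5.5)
Your proposal is correct and follows the paper's proof essentially step for step. The transport term yields $\int(\partial_1u)|\G|^p\dd x$, and the vector C\'ordoba--C\'ordoba inequality with self-adjointness of $\Lambda^\alpha$ absorbs the $\Lambda^\alpha\rho$ part of $\partial_1u=\Lambda^\alpha\rho+G$ into the $\rho$-weighted dissipation, leaving only $\int G|\G|^p\dd x$; the forcing is then closed by H\"older, Lemma~\ref{lem:commutator} and Gr\"onwall, exactly as in the paper (your componentwise fallback would only add a harmless dimensional prefactor in front of $e^{C_pG_*t}$, so the vector form is the one that gives \eqref{eq:Gamma-Lp} as stated).
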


\begin{proof}
Multiplying \eqref{eq:Gamma-equation} by $p|\G|^{p-2}\G$ and integrating over $\T^d$, we obtain
\begin{align}\label{eq:Lpest}
  & \frac{\dd}{\dd t}\int_{\T^d}|\G|^p\dd x = 
  \int_{\T^d}(\partial_1u)\,|\G|^p\dd x
  - p\int_{\T^d}\rho\,|\G|^{p-2}\G\cdot\Lambda^\alpha\G\dd x
  + p\int_{\T^d}f\cdot |\G|^{p-2}\G\dd x\notag\\
  & \qquad = 
  \int_{\T^d}\Lambda^\alpha\rho\,|\G|^p\dd x
  + \int_{\T^d}G\,|\G|^p\dd x
  - p\int_{\T^d}\rho\,|\G|^{p-2}\G\cdot\Lambda^\alpha\G\dd x
  + p\int_{\T^d}f\cdot |\G|^{p-2}\G\dd x,
\end{align}
where we have used the relation \eqref{eq:Gamma-structural}.

Applying the vector-valued C\'ordoba--C\'ordoba inequality \cite{cordoba2003pointwise} 
to the convex function $s\mapsto |s|^p$ with $s=\G$, we have
\begin{equation}\label{eq:CC}
  \Lambda^\alpha(|\G|^p) \leq p|\G|^{p-2}\G \cdot\Lambda^\alpha\G.
\end{equation}
Therefore,
\[
 \int_{\T^d}\Lambda^\alpha\rho\,|\G|^p\dd x 
 = \int_{\T^d}\rho\,\Lambda^\alpha(|\G|^p)\dd x 
 \le p\int_{\T^d}\rho\,|\G|^{p-2}\G \cdot\Lambda^\alpha\G\dd x.
\]

Noting that
\[
 \norm{|\G|^{p-2}\G}{L^{\frac{p}{p-1}}} = \left(\int_{\T^d} |\Gamma|^{(p-1)\cdot\frac{p}{p-1}}\dd x\right)^{\frac{p-1}{p}}
 = \norm{\G}{L^p}^{p-1},
\]
we apply H\"older's inequality to the remaining two terms in \eqref{eq:Lpest} and obtain
\[
  \frac{\dd}{\dd t}\norm{\G}{L^p}^p
  \leq \norm G{L^\infty} \norm{\G}{L^p}^p + p \norm f{L^p} \norm{\G}{L^p}^{p-1}.
\]

Finally, for the forcing term, we apply commutator estimate \eqref{eq:commutator} and obtain
\begin{equation}\label{eq:fcommutator}
 \norm{f_k}{L^p} = \norm{\comm{\mathcal R_{k,\alpha}}{u}G}{L^p}
 \le C_{d,\alpha,p} \norm{\nabla\Lambda^{-\alpha}u}{L^p} \norm{G}{L^\infty} 
 \le C_{d,\alpha,p} G_* \norm{\G}{L^p}.
\end{equation}

Applying all the estimates to \eqref{eq:Lpest}, we find
\[
  \frac{\dd}{\dd t}
  \norm{\G}{L^p}^p
  \leq
  C_pG_*\norm{\G}{L^p}^p.
\]
Gr\"onwall's inequality gives \eqref{eq:Gamma-Lp}.
\end{proof}

As a direct consequence of \eqref{eq:Gamma-Lp} and \eqref{eq:fcommutator}, we have the bound
\begin{equation}\label{eq:Gamma-f-Lp}
  \norm{f(t)}{L^p}
  \leq C_pG_*e^{C_pG_*t} \norm{\G_0}{L^p}.
\end{equation}

\subsection{The \texorpdfstring{$L^\infty$}{L-infinity} bound}

Proposition ~\ref{prop:Gamma-Lp} does not directly extend to the case when $p=\infty$. In particular, the estimate in \eqref{eq:fcommutator} ultimately uses the $L^p$-boundedness of the periodic Riesz transforms, while the Riesz transforms do not map $L^\infty$ to $L^\infty$. Thus we do not have a direct estimate of the form
\[
  \norm f{L^\infty} \leq C G_* \norm{\G}{L^\infty}.
\]

To obtain an $L^\infty$ bound, we first use the dissipation to obtain an $L^p\to L^\infty$ smoothing estimate for the homogeneous equation corresponding to \eqref{eq:Gamma-equation}:
\begin{equation}\label{eq:Gamma-homogeneous}
  \partial_tv+u\partial_1v+\rho\Lambda^\alpha v=0,
  \qquad
  v(\cdot,s)=v_s,
\end{equation}
where $v$ is scalar-valued. The estimate will be applied
componentwise to $\G$.

We first state the following maximum principle.
\begin{lemma}
\label{lem:max}
For every scalar solution of \eqref{eq:Gamma-homogeneous} and every
$s\leq t$,
\begin{equation}\label{eq:Gamma-max-principle}
  \norm{v(t)}{L^\infty} \leq\norm{v_s}{L^\infty}.
\end{equation}
\end{lemma}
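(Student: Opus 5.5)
The plan is a standard pointwise maximum-principle argument at the extremal point, using the sign of $\Lambda^\alpha$ at a maximum and the positivity of $\rho$. It suffices to show that $M(t):=\max_{x\in\T^d}v(x,t)$ is nonincreasing and $m(t):=\min_{x\in\T^d}v(x,t)$ is nondecreasing on $[s,t]$. Applying the first statement to $-v$, which solves the same linear equation \eqref{eq:Gamma-homogeneous}, gives the second. Together these yield $\norm{v(t)}{L^\infty}\le\norm{v_s}{L^\infty}$.

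\textbf{Step 1: the inequality at a maximum point.} Fix a time $\tau$ and let $\bar x=\bar x(\tau)$ be a point where $v(\cdot,\tau)$ attains its maximum. Such a point exists by continuity on the compact torus, and $v$ is $C^1$ in space, as in the regularity class of \cref{prop:LWP}. At $\bar x$ we have $\partial_1 v(\bar x,\tau)=0$, so the transport term vanishes. By the singular-integral representation \eqref{eq:fractional-laplacian},
\[
  \Lambda^\alpha v(\bar x,\tau)=c_{d,\alpha}\,\mathrm{p.v.}\int_{\R^d}\frac{v(\bar x,\tau)-v(y,\tau)}{|\bar x-y|^{d+\alpha}}\dd y\ \geq 0,
\]
since the integrand is pointwise nonnegative on the periodic extension. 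Because $\rho\ge\underline\rho>0$ by \eqref{eq:uniform-density-bounds}, it follows that $\partial_t v(\bar x,\tau)=-\rho\Lambda^\alpha v(\bar x,\tau)\le0$.

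\textbf{Step 2: from the pointwise inequality to monotonicity of $M$.} This is the only technical point: $M(t)$ need not be differentiable, because the maximizer may jump. I will use the standard Rademacher-type lemma, as in C\'ordoba--C\'ordoba \cite{cordoba2003pointwise} or Kiselev--Nazarov--Volberg. If $v$ is $C^1$ in time with values in $C(\T^d)$ (or merely Lipschitz in time, uniformly in $x$), then $M$ is Lipschitz. At almost every $\tau$ one has $M'(\tau)=\partial_t v(\bar x,\tau)$ for any maximizer $\bar x$ of $v(\cdot,\tau)$. Combining this with Step 1 gives $M'\le0$ a.e., hence $M(t)\le M(s)$.

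The required time regularity holds for the smooth solutions under consideration, since $u$, $\rho$ and $v$ are classical on $[0,T_*)$. Should it be needed, one can instead argue by contradiction. Consider $v_\varepsilon=v-\varepsilon(t-s)$. If $\max v_\varepsilon$ first exceeded $\max v_s$ at some time $\tau$ and point $\bar x$, then $\partial_t v_\varepsilon(\bar x,\tau)\ge0$. On the other hand, the equation gives $\partial_t v_\varepsilon=-\rho\Lambda^\alpha v-\varepsilon\le-\varepsilon<0$ there, a contradiction. Letting $\varepsilon\to0$ gives the claim.
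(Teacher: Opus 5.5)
Your proof is correct and follows the same route as the paper. At a spatial maximum the transport term vanishes since $\partial_1 v=0$, the kernel representation gives $\Lambda^\alpha v\geq0$, and $\rho>0$ makes the Dini derivative of $\max_x v(\cdot,t)$ nonpositive; the lower bound follows by applying this to $-v$. Your Step 2 only spells out the standard justification that the paper leaves implicit, either through a Rademacher-type lemma or through the $\varepsilon$-perturbation (there, compare against a threshold $\max v_s+\eta$ so that the first crossing time is strictly after $s$).
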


\begin{proof}
Suppose that $v$
attains a positive spatial maximum at $x_t$. Then
\[
  \partial_1v(x_t,t)=0,
  \qquad
  \Lambda^\alpha v(x_t,t)\geq0.
\]
Since $\rho(x_t,t)>0$,
\eqref{eq:Gamma-homogeneous} implies
\[
  \frac{\dd^+}{\dd t}
  \max_x v(x,t)\leq0.
\]
Applying the same argument to $-v$, we obtain
\eqref{eq:Gamma-max-principle}.
\end{proof}

Next, we derive the $L^p\to L^\infty$ smoothing estimate, using the Moser iteration. 
\begin{proposition}
\label{prop:Gamma-smoothing}
Let $2\leq p<\infty$. Then every smooth solution of
\eqref{eq:Gamma-homogeneous} satisfies
\begin{equation} \label{eq:Gamma-smoothing}
  \norm{v(t)}{L^\infty}
  \leq C\left(1+(t-s)^{-d/(\alpha p)}\right)\norm{v_s}{L^p},
  \qquad
  0\leq s<t<T_*.
\end{equation}
Here $C$ depends only on $d,\alpha,p,\underline\rho,G_*$, and the fixed periodic normalization.
\end{proposition}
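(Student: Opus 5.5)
The plan is a Moser iteration built on $L^q$ energy estimates for \eqref{eq:Gamma-homogeneous}. The Córdoba--Córdoba step of \cref{prop:Gamma-Lp} is refined so that it retains a coercive fractional Sobolev gain. The case $t-s\ge 1$ reduces to $t-s=1$: apply \cref{lem:max} on $[s+1,t]$ and the estimate for unit time on $[s,s+1]$. Hence we may assume $0<\tau:=t-s\le 1$ and aim for $\norm{v(t)}{L^\infty}\le C\tau^{-d/(\alpha p)}\norm{v_s}{L^p}$.

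\textbf{Step 1 ($L^q$ energy inequality with dissipation).} For $q\ge 2$, multiply by $q|v|^{q-2}v$ and use \eqref{eq:Gamma-structural} exactly as in \eqref{eq:Lpest}:
\[
 \frac{\dd}{\dd t}\int|v|^q=\int(\Lambda^\alpha\rho+G)|v|^q-q\int\rho|v|^{q-2}v\Lambda^\alpha v .
\]
Write $\int\Lambda^\alpha\rho\,|v|^q=\int\rho\,\Lambda^\alpha(|v|^q)$. The pointwise quantity $D_q:=q|v|^{q-2}v\Lambda^\alpha v-\Lambda^\alpha(|v|^q)$ is nonnegative by Córdoba--Córdoba, so
\[
\int\rho\,D_q\ \ge\ \underline\rho\int D_q=\underline\rho\, q\int|v|^{q-2}v\Lambda^\alpha v ,
\]
since $\int\Lambda^\alpha(\cdot)=0$ on $\T^d$. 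The Stroock--Varopoulos inequality bounds the right-hand side below by $\underline\rho\,\frac{4(q-1)}{q}\norm{\Lambda^{\alpha/2}w}{L^2}^2$, where $w:=|v|^{q/2}$. This gives
\[
\frac{\dd}{\dd t}\norm{w}{L^2}^2+c\,\underline\rho\,\norm{\Lambda^{\alpha/2}w}{L^2}^2\le G_*\norm{w}{L^2}^2,
\]
with $c$ independent of $q$; even a loss of a factor $q^{-1}$ would be harmless.

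\textbf{Step 2 (Moser iteration).} Use the periodic Sobolev inequality $\norm{w}{L^{2\chi}}\le C(\norm{\Lambda^{\alpha/2}w}{L^2}+\norm{w}{L^2})$ with $\chi=d/(d-\alpha)$ (take any $\chi>1$ if $d=1\le\alpha$ fails, i.e.\ always fine since $\alpha<1$, but for $d=1$ take $\chi=1/(1-\alpha)$). Together with the parabolic interpolation $L^\infty_tL^2\cap L^2_t\dot H^{\alpha/2}\subset L^{2(1+\alpha/d)}_{t,x}$, this yields a reverse-Hölder gain at each step. Run it with exponents $q_k=p\,(1+\alpha/d)^k$ on shrinking time intervals $[s+\tau(1-2^{-k}),t]$, inserting a time cutoff $\eta_k$ with $|\eta_k'|\lesssim 2^k/\tau$. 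The factor $e^{G_*\tau}\le e^{G_*}$ is absorbed into constants. The constants accumulate as $\prod_k(C\,2^k q_k/\tau)^{1/q_k}$. This product converges, and the power of $\tau$ sums to $\tau^{-\frac{d}{\alpha p}}$ by the usual bookkeeping $\sum_k 1/q_k\cdot(1+d/\alpha)\cdot$(scaling), which matches the parabolic scaling $x\sim\tau^{1/\alpha}$. The inhomogeneous $\norm{w}{L^2}$ term in Sobolev on the torus only produces the lower-order term, accounting for the ``$1+$'' in \eqref{eq:Gamma-smoothing}.

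The main obstacle is Step 1: controlling the unbounded compressibility $\partial_1 u=\Lambda^\alpha\rho+G$ while keeping a coercive dissipation despite the variable weight $\rho$. The argument above handles this by splitting $q|v|^{q-2}v\Lambda^\alpha v=\Lambda^\alpha(|v|^q)+D_q$. The first piece exactly cancels the $\Lambda^\alpha\rho$ contribution after moving $\Lambda^\alpha$ onto $\rho$. The second, nonnegative piece is bounded below using $\rho\ge\underline\rho$. I would first verify rigorously that $D_q\ge0$ pointwise (the singular-integral form of the Córdoba--Córdoba difference) and that $\int D_q$ controls $\norm{\Lambda^{\alpha/2}|v|^{q/2}}{L^2}^2$ uniformly in $q$. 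Once this is in place, the rest is the standard iteration.
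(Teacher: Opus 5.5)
Your proposal is correct, and Step~1 is exactly the paper's energy inequality: the C\'ordoba--C\'ordoba defect is weighted by $\rho\geq\underline\rho$, the zero mean of $\Lambda^\alpha(|v|^q)$ absorbs the $\Lambda^\alpha\rho$ part of $\partial_1u$, and Stroock--Varopoulos finishes. Where you differ is the mechanics of the iteration.

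The paper uses neither time cutoffs nor parabolic interpolation. It combines the energy inequality with Sobolev into $\frac{\dd}{\dd t}\norm{v}{L^q}^q+c\norm{v}{L^{q\chi}}^q\leq C\norm{v}{L^q}^q$, with $\chi=d/(d-\alpha)$, and integrates over the dyadic interval $[t_n,t_{n+1}]$. By the mean value theorem it picks a time $r_n$ with $\norm{v(r_n)}{L^{q_{n+1}}}\leq(C2^{n+1}/(t-s))^{1/q_n}\norm{v(t_n)}{L^{q_n}}$. It then carries this bound to $t_{n+1}$ using the almost-monotonicity $\norm{v(t)}{L^q}^q\leq e^{G_*(t-r)}\norm{v(r)}{L^q}^q$, valid for every $q$. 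All norms are taken at single times, and with $q_n=p\chi^n$ the exponent $\sum_n1/q_n=d/(\alpha p)$ appears directly.

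In your space--time scheme with gain $1+\alpha/d$, the product of constants instead gives $\tau^{-(1+d/\alpha)/p}\norm{v}{L^p([s,t]\times\T^d)}$. To reach $\tau^{-d/(\alpha p)}$ you need one more step, $\norm{v}{L^p([s,t]\times\T^d)}\leq\tau^{1/p}e^{G_*/p}\norm{v_s}{L^p}$, which is the dissipation-free form of Step~1. Your ``usual bookkeeping'' sentence hides precisely this step, so write it out. Similarly, the ``$1+$'' in the statement comes from the regime $t-s>1$ handled by \cref{lem:max}. It does not come from the inhomogeneous Sobolev term, which is simply absorbed since $\tau\leq1$.

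As for what each route buys: the paper's is shorter. It exploits the fact that, for this maximum-principle-type equation, each $L^q$ norm is almost nonincreasing, with a loss $e^{G_*/q}$ that tends to $1$. Yours is the textbook De Giorgi--Nash--Moser scheme, which would also survive spatial localization, at the cost of heavier bookkeeping.
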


\begin{proof}
We first derive an $L^q$ energy inequality which is uniform in $q$. Fix $q\geq2$. The same calculation as \eqref{eq:Lpest} yields
\begin{align*}
 \frac{\dd}{\dd t}\norm v{L^q}^q 
 & = \int_{\T^d}\rho \Big(\Lambda^\alpha(|v|^q) - q|v|^{q-2}v \Lambda^\alpha v\Big)\dd x +\int_{\T^d} G |v|^q\dd x\\
 & \le
 \rhom\int_{\T^d} \Big(\Lambda^\alpha(|v|^q) - q|v|^{q-2}v \Lambda^\alpha v\Big)\dd x + G_*\norm v{L^q}^q,
\end{align*}
where we have used the C\'ordoba--C\'ordoba inequality \eqref{eq:CC}. Then we apply the periodic version of the Stroock--Varopoulos inequality \cite[Lemma~5.1]{depablo2012general}:
\begin{equation*}
  \int_{\T^d} q|v|^{q-2}v\Lambda^\alpha v\dd x
  \geq \frac{4(q-1)}{q} \norm{\Lambda^{\alpha/2}(|v|^{q/2})}{L^2}^2
  \geq 2\norm{\Lambda^{\alpha/2}(|v|^{q/2})}{L^2}^2.
\end{equation*}
It yields
\begin{equation} \label{eq:Gamma-energy-bound}
  \frac{\dd}{\dd t}\norm v{L^q}^q + 2\rhom \norm{\Lambda^{\alpha/2}(|v|^{q/2})}{L^2}^2
  \leq G_*\norm v{L^q}^q.
\end{equation}
In particular, discarding the dissipative term gives
\begin{equation}  \label{eq:Gamma-Lq-growth}
  \norm{v(t)}{L^q}^q
  \leq e^{G_*(t-\tau)} \norm{v(\tau)}{L^q}^q,
  \qquad
  s\leq\tau\leq t.
\end{equation}

Next, we use the dissipation to increase the integrability exponent. Set
\[
  \chi=\frac{d}{d-\alpha}>1.
\]
The periodic fractional Sobolev inequality gives
\[
 \norm h{L^{2\chi}}^2
 \leq C\left(\norm{\Lambda^{\alpha/2}h}{L^2}^2+\norm h{L^2}^2\right).
\]
Applying this estimate to $h=|v|^{q/2}$, we obtain
\[
  \norm v{L^{q\chi}}^q
  \leq C \norm{ \Lambda^{\alpha/2}(|v|^{q/2}) }{L^2}^2 + C\norm v{L^q}^q.
\]
Combining this inequality with \eqref{eq:Gamma-energy-bound} yields
\begin{equation} \label{eq:Gamma-energy-bound2}
  \frac{\dd}{\dd t}\norm v{L^q}^q + c \norm{v}{L^{q\chi}}^q
  \leq C\norm v{L^q}^q.
\end{equation}
Let
\[
  \tau<\sigma,
  \qquad
  0<\sigma-\tau\leq1.
\]
Integrating \eqref{eq:Gamma-energy-bound2} on $[\tau,\sigma]$, we get
\begin{equation}\label{eq:iteration-step}
  \int_\tau^\sigma\norm{v(r)}{L^{q\chi}}^q\dd r
  \leq C \norm{v(\tau)}{L^q}^q,
\end{equation}
where $C$ is independent of $q\geq2$.

We now perform the Moser iteration. Fix $0<t-s\leq1$. Define
\[
  q_n=p\chi^n,
  \qquad
  t_n=s+(t-s)(1-2^{-n}),
  \qquad
  n=0,1,2,\ldots,
\]
so that
\[
  t_0=s,\qquad t_n\uparrow t,
  \qquad
  t_{n+1}-t_n=2^{-n-1}(t-s).
\]
Applying \eqref{eq:iteration-step} with
\[
  q=q_n,\qquad \tau=t_n,\qquad \sigma=t_{n+1},
\]
and using $q_{n+1}=q_n\chi$, we obtain
\[
  \int_{t_n}^{t_{n+1}} \norm{v(r)}{L^{q_{n+1}}}^{q_n}\dd r
  \leq C\norm{v(t_n)}{L^{q_n}}^{q_n}.
\]
Hence there exists $r_n\in[t_n,t_{n+1}]$ such that
\[
  \norm{v(r_n)}{L^{q_{n+1}}}
  = \left(\frac{1}{t_{n+1}-t_n}\int_{t_n}^{t_{n+1}} \norm{v(r)}{L^{q_{n+1}}}^{q_n}\dd r\right)^{1/q_n}
  \leq \left(\frac{C2^{n+1}}{t-s}\right)^{1/q_n}\norm{v(t_n)}{L^{q_n}}.
\]
Using \eqref{eq:Gamma-Lq-growth} at exponent $q_{n+1}$,
from $r_n$ to $t_{n+1}$, gives
\begin{equation}
  \norm{v(t_{n+1})}{L^{q_{n+1}}}
  \leq e^{G_*/q_{n+1}}\norm{v(r_n)}{L^{q_{n+1}}}
  \leq \left(\frac{C2^{n+1}}{t-s}\right)^{1/q_n} e^{G_*/q_{n+1}} \norm{v(t_n)}{L^{q_n}}.
  \label{eq:Gamma-Moser-iteration}
\end{equation}
Iterating \eqref{eq:Gamma-Moser-iteration} from $n=0$ to $N-1$, we find
\begin{equation}\label{eq:Gamma-Moser-product}
  \norm{v(t_N)}{L^{q_N}}
  \leq \prod_{n=0}^{N-1} \left(C^{1/q_n} 2^{(n+1)/q_n} e^{G_*/q_{n+1}}\right)
  \cdot (t-s)^{-\sum_{n=0}^{N-1}1/q_n} \norm{v_s}{L^p}.
\end{equation}
Since
\[
  \sum_{n=0}^\infty\frac1{q_n}
  = \frac1p \sum_{n=0}^\infty\chi^{-n}
  = \frac d{\alpha p},
  \quad\text{and}\quad
  \sum_{n=0}^\infty \frac{n+1}{q_n}<\infty,
\]
the product in \eqref{eq:Gamma-Moser-product} converges.
Letting $N\to\infty$, we conclude that
\begin{equation*}
  \norm{v(t)}{L^\infty}
  \leq
  C
  (t-s)^{-d/(\alpha p)}
  \norm{v_s}{L^p},
  \qquad
  0<t-s\leq1.
\end{equation*}

For $t>s+1$, we apply the maximum principle \eqref{eq:Gamma-max-principle} and get
\[
  \norm{v(t)}{L^\infty} \leq \norm{v(s+1)}{L^\infty} \leq C\|v_s\|_{L^p}.
\]
This finishes the proof.
\end{proof}

We are ready to obtain the $L^\infty$ bound on $\G$. We apply Duhamel's formula and use the smoothing estimate to control the forcing term.
\begin{proposition}\label{prop:Gamma-Linfty}
For every $0<T<T_*$, there exists $C_T>0$, depending only on $T$, $d$, $\alpha$, $\underline\rho$, $G_*$, and the periodic normalization, such that
\begin{equation}\label{eq:Gamma-Linfty}
  \sup_{0\leq t\leq T} \norm{\G(t)}{L^\infty} 
  \leq C_T \norm{\G_0}{L^\infty}.
\end{equation}
 \end{proposition}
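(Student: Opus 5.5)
We will use Duhamel's principle for the linear operator $\partial_t+u\partial_1+\rho\Lambda^\alpha$, with $u$ and $\rho$ frozen as given coefficients. For $0\le s\le t$, let $S(t,s)$ be the solution operator of the homogeneous problem \eqref{eq:Gamma-homogeneous}, which maps $v_s$ to $v(t)$. Applying it componentwise to \eqref{eq:Gamma-equation} gives
\[
  \G(t)=S(t,0)\G_0+\int_0^tS(t,s)f(s)\dd s .
\]
This representation is justified by linearity and uniqueness of smooth solutions to the linear transport–diffusion equation; the coefficients are smooth on $[0,T]$ for $T<T_*$.

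The first term is bounded by the maximum principle, \cref{lem:max}: $\norm{S(t,0)\G_0}{L^\infty}\le\norm{\G_0}{L^\infty}$.

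For the integral, we fix one finite $p$ with $p>d/\alpha$. This makes $d/(\alpha p)<1$, so the singularity in \eqref{eq:Gamma-smoothing} is integrable. By \cref{prop:Gamma-smoothing},
\[
  \norm{S(t,s)f(s)}{L^\infty}\le C\bigl(1+(t-s)^{-d/(\alpha p)}\bigr)\norm{f(s)}{L^p}.
\]
The forcing is controlled by \eqref{eq:Gamma-f-Lp}:
\[
  \norm{f(s)}{L^p}\le C_pG_*e^{C_pG_*s}\norm{\G_0}{L^p}\le C_pG_*e^{C_pG_*T}(2\pi)^{d/p}\norm{\G_0}{L^\infty},
\]
where the last step uses that $\T^d$ has finite measure. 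Integrating in $s$,
\[
  \int_0^t\bigl(1+(t-s)^{-d/(\alpha p)}\bigr)\dd s\le T+\frac{T^{1-d/(\alpha p)}}{1-d/(\alpha p)} .
\]
Combining the two pieces gives \eqref{eq:Gamma-Linfty}. The constant is
\[
  C_T=1+C\,G_*e^{C_pG_*T}\Bigl(T+\frac{T^{1-d/(\alpha p)}}{1-d/(\alpha p)}\Bigr),
\]
with $p$ fixed depending only on $d,\alpha$ (for instance $p=\max\{2,2d/\alpha\}$). So $C_T$ depends only on the allowed quantities.

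The only delicate point is rigor in the Duhamel step. We need the smoothing estimate to hold for the propagator starting from an arbitrary time $s$, with data $f(s)$ that is merely smooth. \cref{prop:Gamma-smoothing} is stated for every smooth solution and every $0\le s<t<T_*$, so this is covered. We also need the map $s\mapsto S(t,s)f(s)$ to be measurable and integrable in $L^\infty$. If one prefers to avoid abstract propagators, an alternative is to split $\G=v^{(0)}+w$. Here $v^{(0)}$ solves the homogeneous problem with data $\G_0$, and $w$ is the superposition of the solutions $v^{(s)}$ of \eqref{eq:Gamma-homogeneous} started at time $s$ with data $f(s)$. One then checks directly that $\partial_tw+u\partial_1w+\rho\Lambda^\alpha w=f$ with $w(0)=0$, and uniqueness identifies $\G=v^{(0)}+w$.
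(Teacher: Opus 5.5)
Your proposal is correct and follows the paper's proof essentially verbatim. Like the paper, you use Duhamel's formula with the propagator $S(t,s)$, bound the free term by the maximum principle (\cref{lem:max}), and bound the forcing by the smoothing estimate \eqref{eq:Gamma-smoothing} with a fixed $p>d/\alpha$ together with \eqref{eq:Gamma-f-Lp} and $\norm{\G_0}{L^p}\le C\norm{\G_0}{L^\infty}$, which yields the same form of $C_T$. One small point to tighten: \cref{prop:Gamma-Lp} and \cref{prop:Gamma-smoothing} require $2\le p<\infty$, so you should take $p>\max\{2,d/\alpha\}$ as the paper does; your explicit choice $p=\max\{2,2d/\alpha\}$ already satisfies this.
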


\begin{proof}
Denote $S(t,s)$ the solution operator of the homogeneous equation \eqref{eq:Gamma-homogeneous}, namely
\[
v(t) = S(t,s)\, v_s.
\]
Apply Duhamel's formula for each component of \eqref{eq:Gamma-equation}:
\begin{equation}
  \Gamma_k(t) = S(t,0)\, \Gamma_{k,0} + \int_0^t S(t,s)\, f_k(s)\dd s.
  \label{eq:Gamma-Duhamel}
\end{equation}
For the first term, we simply apply the maximum principle \eqref{eq:Gamma-max-principle} and get
\[
  \norm{S(t,0)\, \Gamma_{k,0}}{L^\infty} \leq \norm{\Gamma_{k,0}}{L^\infty}.
\]
For the second term, fix $p>\max\left\{2,\frac d\alpha\right\}$. The smoothing estimate \eqref{eq:Gamma-smoothing} yields
\begin{align*}
  \norm{\int_0^t S(t,s)\, f_k(s)\dd s}{L^\infty}
  &\leq
  C\int_0^t \left(1+(t-s)^{-\beta}\right) \norm{f_k(s)}{L^p}\dd s,
\end{align*}
where $\beta:=\frac{d}{\alpha p}\in(0,1)$.
Using \eqref{eq:Gamma-f-Lp}, we further estimate
\[
 \norm{\int_0^t S(t,s)\, f_k(s)\dd s}{L^\infty}
 \leq C\norm{\G_0}{L^p}\int_0^t \left(1+(t-s)^{-\beta}\right) e^{Cs}\dd s
 \leq C \norm{\G_0}{L^\infty}  \left(T+\frac{T^{1-\beta}}{1-\beta}\right) e^{CT}.
\]
Combining the two terms yields \eqref{eq:Gamma-Linfty} with $C_T = C \left[1+\left(T+\frac{T^{1-\beta}}{1-\beta}\right)e^{CT}\right]$.
In particular, these constants remain bounded as $T\uparrow T_*<\infty$.
\end{proof}

\subsection{Exponential decay of the potential gradient}
\label{subsec:Gamma-decay}

Using the flocking estimate in \eqref{eq:velocity-alignment},
we can upgrade the preceding bounds for $\G$ to exponential decay.

\begin{proposition}[Decay of the potential gradient]
\label{prop:Gamma-decay}
For every fixed $2\leq p<\infty$, there exists $C_p>0$, depending
only on the initial data, $d,\alpha,p$, and the fixed periodic
normalization, such that
\begin{equation}
  \norm{\G(t)}{L^p}\leq C_pe^{-\kappa t},
  \qquad 0\leq t<T_*.
  \label{eq:Gamma-decay-Lp}
\end{equation}
Moreover, there exists a constant $C_\Gamma>0$, with the same
dependence except for $p$, such that
\begin{equation}
  \norm{\G(t)}{L^\infty}\leq C_\Gamma e^{-\kappa t},
  \qquad 0\leq t<T_*.
  \label{eq:Gamma-decay-Linfty}
\end{equation}
\end{proposition}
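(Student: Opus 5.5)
We want exponential decay of $\norm{\G}{L^p}$ and $\norm{\G}{L^\infty}$. The energy estimate in Proposition~\ref{prop:Gamma-Lp} only gives growth $e^{C_pG_*t}$, so decay must come from a different source: the velocity alignment \eqref{eq:velocity-alignment}, combined with a dissipation estimate. The plan is to get decay first in a weak norm, then upgrade it to $L^p$ and finally to $L^\infty$ by the smoothing machinery of Propositions~\ref{prop:Gamma-smoothing} and~\ref{prop:Gamma-Linfty} applied on unit time windows.

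\textbf{Step 1: decay in $L^2$ via the energy of $u$.} Since $\G=\nabla\Lambda^{-\alpha}(u-\mean u)$, by Plancherel $\norm{\G}{L^2}=\norm{\Lambda^{1-\alpha}(u-\mean u)}{L^2}$. I will use the standard energy identity for the density-weighted fluctuation, e.g.
\[
\frac{\dd}{\dd t}\int\rho|u-\bar u|^2\dd x=-\iint\psi(x-y)|u(x)-u(y)|^2\rho(x)\rho(y)\dd x\dd y,
\]
which is bounded by $-c\,\underline\rho^2\norm{\Lambda^{\alpha/2}u}{L^2}^2$. Integrating over $[t,t+1]$ and using $\int\rho|u-\bar u|^2\lesssim V(t)^2\lesssim e^{-2\kappa t}$ gives $\int_t^{t+1}\norm{\Lambda^{\alpha/2}u}{L^2}^2\lesssim e^{-2\kappa t}$. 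To reach the order $1-\alpha$, I will also need higher-order dissipation. For $1-\alpha\le\alpha/2$ interpolation suffices. Otherwise I interpolate between $\norm{u-\mean u}{L^2}\lesssim e^{-\kappa t}$ and a bound on $\norm{\G}{L^p}$ or $\norm{\nabla u}{\cdot}$ that is merely exponential in $t$. This loses a fraction of the rate, and I will then rename $\kappa$, which the statement allows since constants depend only on the data.

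A cleaner alternative I would try first: write $\G=\nabla\Lambda^{-\alpha}(u-\bar u)$ and use the Gagliardo--Nirenberg-type bound
\[
\norm{\nabla\Lambda^{-\alpha}w}{L^p}\lesssim \norm{w}{L^\infty}^{\theta}\norm{\nabla\Lambda^{-\alpha}w}{L^{r}}^{1-\theta}\quad\text{or}\quad \norm{w}{L^\infty}^{\theta}\norm{\nabla w}{L^{r}}^{1-\theta}.
\]
The first factor is at most $V(t)\le V_0e^{-\kappa t}$. The second factor grows at most like $e^{Ct}$, by \eqref{eq:Gamma-Lp}. This only gives decay if $\theta\kappa>C(1-\theta)$, which is not automatic.

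\textbf{Step 2: upgrade to uniform-in-time $L^p$ decay (main obstacle).} The honest route is a nonlinear dissipation estimate. In \eqref{eq:Lpest} I keep the dissipative term, obtaining (as in \eqref{eq:Gamma-energy-bound})
\[
\tfrac{\dd}{\dd t}\norm{\G}{L^p}^p+c\,\norm{\Lambda^{\alpha/2}|\G|^{p/2}}{L^2}^2\le CG_*\norm{\G}{L^p}^p.
\]
I then bound the dissipation from below by a superlinear power of $\norm{\G}{L^p}$ divided by a power of $\norm{u-\mean u}{L^\infty}\lesssim e^{-\kappa t}$. The heuristic is that a large $\G$ paired with a small oscillation of $u$ forces high frequencies, since $\G$ is $\Lambda^{1-\alpha}$ of $u$. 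Concretely, I would aim for
\[
D\gtrsim \norm{\G}{L^p}^{p+\gamma}/V(t)^{\gamma}
\]
with some $\gamma>0$. This yields a Riccati-type inequality
\[
y'\le Cy-c\,e^{\gamma\kappa t}y^{1+\gamma/p},\qquad y=\norm{\G}{L^p}^p,
\]
whose solutions satisfy $y\lesssim e^{-p\kappa' t}$ with a constant independent of $T_*$. This establishes \eqref{eq:Gamma-decay-Lp}. Proving the lower bound on $D$ in the vector-valued fractional setting is where I expect the real work. I would attempt it through a pointwise nonlinear maximum-principle-type bound à la Constantin--Vicol, adapted to integrals.

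\textbf{Step 3: $L^\infty$ decay.} Apply the Duhamel formula \eqref{eq:Gamma-Duhamel} on $[t-1,t]$ rather than from $0$. The homogeneous part is controlled by Proposition~\ref{prop:Gamma-smoothing} with $s=t-1$:
\[
\norm{S(t,t-1)\G(t-1)}{L^\infty}\le C\norm{\G(t-1)}{L^p}.
\]
For the forcing, \eqref{eq:fcommutator} gives $\norm{f(s)}{L^p}\le CG_*\norm{\G(s)}{L^p}$, and the integral kernel $(t-s)^{-\beta}$ is integrable on the unit window. Both pieces are therefore $\lesssim e^{-\kappa t}$ by Step 2. For $t<1$, Proposition~\ref{prop:Gamma-Linfty} with $T=1$ applies instead. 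Together these give \eqref{eq:Gamma-decay-Linfty}.
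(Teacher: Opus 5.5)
Your Steps 2--3 are essentially the paper's proof. The paper keeps the dissipation in its pointwise form rather than the Stroock--Varopoulos quantity, and bounds it by C\'ordoba--C\'ordoba for $s\mapsto s^{p/2}$ followed by the Constantin--Vicol bound: $p|\G|^{p-2}\G\cdot\Lambda^\alpha\G-\Lambda^\alpha(|\G|^p)\geq p|\G|^{p-2}\bigl(\G\cdot\Lambda^\alpha\G-\tfrac12\Lambda^\alpha|\G|^2\bigr)\geq c_p|\G|^{p+\alpha}\norm{\phi}{L^\infty}^{-\alpha}$ for $\G=\nabla\phi$ with $\phi=\Lambda^{-\alpha}(u-\mean u)$. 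It then uses $\norm{\phi}{L^\infty}\leq\norm{H_\alpha}{L^1}V(t)$, which holds because the kernel of $\Lambda^{-\alpha}$ is integrable with zero mean, so your $\gamma=\alpha$; the $L^\infty$ decay follows from Duhamel on $[t-1,t]$ exactly as you describe. Step 1 is unnecessary, and so is renaming $\kappa$, which the statement does not allow since $\kappa$ is the flocking rate of \eqref{eq:velocity-alignment}: with $w=e^{p\kappa t}y$ your inequality becomes $w'\leq(C+p\kappa)w-c\,w^{1+\gamma/p}$, so $w$ stays bounded and the decay rate is exactly $\kappa$.
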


\begin{proof}
If $V_0=0$, \eqref{eq:velocity-alignment} implies that $u$ is
spatially constant and $\G\equiv0$, so both conclusions are immediate.
Suppose henceforth that $V_0>0$, and write
\[
  \phi=\Lambda^{-\alpha}u,\qquad \G=\nabla\phi.
\]
The periodic convolution kernel $H_\alpha$ of $\Lambda^{-\alpha}$
is integrable, with a singularity of order $|h|^{\alpha-d}$,
and has zero mean by \eqref{eq:negative-Lambda-definition}.
Hence
\[
  \phi(x,t)=\int_{\T^d}H_\alpha(x-y)
      \bigl(u(y,t)-u(x,t)\bigr)\dd y.
\]
Using \eqref{eq:velocity-alignment}, we obtain
\begin{equation}
  \norm{\phi(t)}{L^\infty}
  \leq \norm{H_\alpha}{L^1}V(t)
  \leq A_0e^{-\kappa t},
  \qquad A_0:=\norm{H_\alpha}{L^1}V_0>0.
  \label{eq:potential-decay}
\end{equation}

To proceed, we refine the estimate \eqref{eq:CC} as follows:
\begin{align*}
  p|\G|^{p-2}\G \cdot\Lambda^\alpha\G - \Lambda^\alpha(|\G|^p)
  & \geq p|\G|^{p-2}\G \cdot\Lambda^\alpha\G - \frac{p}{2}\big(|\G|^2\big)^{\frac{p}{2}-1}\,\Lambda^\alpha(|\G|^2)\\
  & = p|\G|^{p-2}\Big(\G \cdot\Lambda^\alpha\G - \frac12\Lambda^\alpha(|\G|^2)\Big)\geq c_p\frac{|\G|^{p+\alpha}}{\norm\phi{L^\infty}^{\alpha}},
\end{align*}
Here the first inequality is the C\'ordoba--C\'ordoba inequality
for the convex function $s\mapsto s^{p/2}$ on $[0,\infty)$, with
$s=|\G|^2$.  The last inequality follows from
\cite[Theorem~2.5]{constantin2012nonlinear}, whose cutoff proof applies
to periodic extensions using \eqref{eq:fractional-laplacian}.
If $\phi\equiv0$, then $\G\equiv0$ and the quotient is understood as zero.

Inserting the refined estimate into \eqref{eq:Lpest}, and using
$\rho\geq\underline\rho$, \eqref{eq:fcommutator}, and
\eqref{eq:potential-decay}, gives
\[
  \frac{\dd}{\dd t}\norm{\G(t)}{L^p}^p
  +a_pe^{\alpha\kappa t}\norm{\G(t)}{L^p}^{p+\alpha}
  \leq C_pG_*\norm{\G(t)}{L^p}^p,
\]
where $a_p:=c_p\underline\rho A_0^{-\alpha}|\T^d|^{-\alpha/p}>0$.
Multiplying by $e^{p\kappa t}$ yields
\[
  \frac{\dd}{\dd t}\left(e^{\kappa t}\norm{\G(t)}{L^p}\right)^p
  \leq -a_p\left(e^{\kappa t}\norm{\G(t)}{L^p}\right)^{p+\alpha}
  + (C_pG_*+p\kappa) \left(e^{\kappa t}\norm{\G(t)}{L^p}\right)^p.
\]
Comparison with a constant supersolution gives
\[
  e^{\kappa t}\norm{\G(t)}{L^p}
  \leq \max\left\{\norm{\G_0}{L^p},
       \left(\frac{C_pG_*+p\kappa}{a_p}\right)^{1/\alpha}\right\},
\]
which proves \eqref{eq:Gamma-decay-Lp}.

To obtain the endpoint estimate, fix once and for all
$p>\max\{2,d/\alpha\}$, and put
$\beta=d/(\alpha p)<1$.  By \eqref{eq:fcommutator} and
\eqref{eq:Gamma-decay-Lp},
$\norm{f(t)}{L^p}\leq Ce^{-\kappa t}$.
For $1\leq t<T_*$, apply the componentwise Duhamel formula
\eqref{eq:Gamma-Duhamel} on $[t-1,t]$ and the smoothing estimate
\eqref{eq:Gamma-smoothing} to obtain
\begin{align*}
  \norm{\G(t)}{L^\infty}
  &\leq C\norm{\G(t-1)}{L^p}
       +C\int_{t-1}^t\bigl(1+(t-s)^{-\beta}\bigr)
                    \norm{f(s)}{L^p}\dd s\\
  &\leq Ce^{-\kappa(t-1)}
       +Ce^{-\kappa t}\int_0^1(1+r^{-\beta})e^{\kappa r}\dd r
   \leq Ce^{-\kappa t}.
\end{align*}
For $0\leq t<\min\{1,T_*\}$, applying \eqref{eq:Gamma-Linfty} and increasing the constant by a factor $e^{\kappa}$ gives
\[
  \norm{\G(t)}{L^\infty} \leq C \left[1+\left(t+\frac{t^{1-\beta}}{1-\beta}\right)e^{Ct}\right] \norm{\G_0}{L^\infty}\leq C \left[1+\frac{2-\beta}{1-\beta}e^C\right] \norm{\G_0}{L^\infty}e^{\kappa} e^{-\kappa t}.
\]
This proves \eqref{eq:Gamma-decay-Linfty} on the full existence interval.
\end{proof}

\section{Propagation of the moduli of continuity}
\label{sec:MOC-Gamma}

In this section, we propagate simultaneously a time-independent modulus of
continuity for $\rho$ and a time-dependent modulus of continuity for $\G$.
These quantities have invariant amplitude under the critical scaling
(on periodic lifts, with the period rescaled)
\[
    \rho(x,t)\rightsquigarrow\rho(\lambda x,\lambda^\alpha t),
    \qquad
    \G(x,t)\rightsquigarrow\G(\lambda x,\lambda^\alpha t).
\]
The density bounds \eqref{eq:uniform-density-bounds} and
\cref{prop:Gamma-decay} give, for every $0\leq t<T_*$,
\begin{align}
    \sup_{x,y\in\T^d}|\rho(x,t)-\rho(y,t)|
    &\leq\overline\rho,
    \label{eq:rho-MOC-amplitude}\\
    \sup_{x,y\in\T^d}|\G(x,t)-\G(y,t)|
    &\leq M_\Gamma e^{-\kappa t},
    \qquad M_\Gamma:=2C_\Gamma.
    \label{eq:Gamma-MOC-amplitude}
\end{align}
Here $\kappa>0$ is the rate in \eqref{eq:velocity-alignment}, and
$M_\Gamma$ is independent of the existence interval.

We follow the framework of \cite[Section~3]{li2024global}:
first choose the two moduli, describe their breakthrough scenarios,
and then show that the time derivatives have the required signs at a
putative contact.  The decay estimate \eqref{eq:Gamma-MOC-amplitude}
plays the role of the velocity-alignment estimate in that argument.
The principal additional estimate concerns the commutator forcing
in the equation for $\G$.  Throughout this section, constants denoted
by $C$ may depend on $d,\alpha$, the initial data, and $\kappa$, but not on $\delta,\lambda$, or time; a subscript
records any additional dependence.

For points on $\T^d$, write $|x-y|:=d_{\T^d}(x,y)$, and use shortest
lifts when taking their difference in $\R^d$.
All Euclidean integrals below act on periodic extensions.

\subsection{The moduli of continuity and breakthrough scenarios}
\label{subsec:MOC-framework}

Recall the family of moduli used in
\cite[Section~3]{li2024global}:
\begin{equation}
\omega(\xi) = \omega_{\lambda}^{\delta}(\xi) :=
\begin{cases}
    \delta\lambda^{-1}\xi
    -\dfrac14\delta\lambda^{-1-\frac\alpha2}\xi^{1+\frac\alpha2},
    & 0\leq\xi\leq\lambda,\\[6pt]
    \dfrac34\delta
    +\dfrac{\delta}{2}\log\dfrac{\xi}{\lambda},
    & \xi>\lambda,
\end{cases}
\label{eq:MOC-family}
\end{equation}
where the parameters $0<\delta<1$ and $0<\lambda<1$ are to be determined. Note that $\omega$ is an increasing and concave function with $\omega(0)=0$, $\omega'(0^+)=\delta\lambda^{-1}$, and $\omega''(0^+)=-\infty$.
Define
\begin{equation}\label{eq:MOC-omega12}
    \omega_1(\xi):=\omega(\xi),
    \qquad
    \omega_2(\xi,t):=e^{-\kappa t}\omega(\xi).
\end{equation}
Our goal is to show that, for all $0\leq t<T_*$,
\begin{equation}\label{eq:MOC-rho-goal}
    |\rho(x,t)-\rho(y,t)|<\omega_1(|x-y|),
\end{equation}
and
\begin{equation}\label{eq:MOC-Gamma-goal}
    |\G(x,t)-\G(y,t)|<\omega_2(|x-y|,t),
\end{equation}
for every $x\neq y\in\T^d$.  Periodic functions are identified with
their periodic extensions when applying the modulus estimates.
Note that $\omega_2$ is chosen to be time-dependent in order to obtain the flocking estimate.

For any fixed $\delta>0$, the initial data are Lipschitz, so choosing
$\lambda>0$ sufficiently small makes both $\rho_0$ and $\G_0$ obey
$\omega$.  We refer to \cite[Lemma~3.1]{li2024global} for the
standard fitting argument; its proof applies to the vector increment
of $\G_0$ without change.  The propagated moduli imply
\begin{equation*}
    \norm{\nabla\rho(t)}{L^\infty}
    \leq\omega_1'(0^+)=\delta\lambda^{-1},
\end{equation*}
and
\begin{equation*}
    [\G(t)]_{\mathrm{Lip}}
    \leq\partial_\xi\omega_2(0^+,t)
    =\delta\lambda^{-1}e^{-\kappa t}.
\end{equation*}
Thus the second modulus also captures the decay of the potential
gradient.

We use the standard breakthrough scenario of the modulus-of-continuity method \cite{kiselev2007global,li2024global}.
Suppose $t=t_1\in(0,T_*)$ is the first time at which either
\eqref{eq:MOC-rho-goal} or \eqref{eq:MOC-Gamma-goal} fails.
Then there exist $x\neq y\in\T^d$, with
\[
    \xi=|x-y|,
\]
such that either
\begin{equation*}
    \rho(x,t_1)-\rho(y,t_1)
    =
    \omega_1(\xi),
\end{equation*}
or
\begin{equation}
    |\G(x,t_1)-\G(y,t_1)| = \omega_2(\xi,t_1).
    \label{eq:MOC-Gamma-break}
\end{equation}
Both modulus inequalities hold non-strictly on $[0,t_1]$ and
strictly for $t<t_1$.

Accordingly, it suffices to prove, in the density scenario,
\begin{equation}
    \left.\partial_t\bigl(\rho(x,t)-\rho(y,t)\bigr)
    \right|_{t=t_1}<0,
    \label{eq:MOC-rho-sign}
\end{equation}
and, in the $\G$ scenario,
\begin{equation}
    \left.\partial_t|\G(x,t)-\G(y,t)|\right|_{t=t_1}
    +\kappa\omega_2(\xi,t_1)<0.
    \label{eq:MOC-Gamma-sign}
\end{equation}
The second condition is equivalent, at contact, to strict negativity
of the derivative of
$|\G(x,t)-\G(y,t)|/\omega_2(\xi,t)$, exactly as in
\cite[Section~3.1]{li2024global}.

The amplitude bounds restrict the possible breakthrough distances.
In the density scenario, \eqref{eq:rho-MOC-amplitude} gives
\begin{equation*}
    0<\xi\leq\Xi_1:=\omega^{-1}(\overline\rho)
    \leq\lambda\max\left\{
        e^{2\overline\rho/\delta-3/2},1
    \right\}.
\end{equation*}
In the $\G$ scenario, the common factor $e^{-\kappa t_1}$ in
\eqref{eq:Gamma-MOC-amplitude} and \eqref{eq:MOC-Gamma-break}
cancels.  Consequently,
\begin{equation*}
    0<\xi\leq\Xi_2:=\omega^{-1}(M_\Gamma)
    \leq\lambda\max\left\{
        e^{2M_\Gamma/\delta-3/2},1
    \right\}.
\end{equation*}
Set
\begin{equation}\label{eq:MOC-E}
    E:=\max\left\{
        e^{2\overline\rho/\delta-3/2},
        e^{2M_\Gamma/\delta-3/2},1
    \right\}.
\end{equation}
Once $\delta$ is fixed, $E$ is independent of $\lambda$ and time.
We shall choose $\lambda$ so small that $E\lambda<1/2$.

We next recall two standard dissipation bounds associated with the moduli \eqref{eq:MOC-family}.
Suppose a function $g$ obeys $\omega$ and $g(x) - g(y) = \omega(\xi)$. Then, we have
\begin{equation}\label{eq:D-bound}
	\Lambda^\alpha g(x) - \Lambda^\alpha g(y) \geq D_\alpha(\xi)>0,
\end{equation}
and
\begin{equation}\label{eq:A-bound}
	-\Lambda^\alpha g(x) \leq A_\alpha(\xi),
\end{equation}
where
\begin{equation*}
    D_\alpha(\xi) :=
    C_1\left(\int_0^{\xi/2}\frac{2\omega(\xi)-\omega(\xi+2\eta)-\omega(\xi-2\eta)}{\eta^{1+\alpha}}\dd\eta
    +\int_{\xi/2}^{\infty}\frac{2\omega(\xi)-\omega(2\eta+\xi)+\omega(2\eta-\xi)}{\eta^{1+\alpha}}\dd\eta\right),
\end{equation*}
and
\begin{equation*}
    A_\alpha(\xi):= c_{d,\alpha}\,\mathrm{p.v.}\int_{\R^d} \frac{\omega(|\xi e_1-z|)-\omega(\xi)}{|z|^{d+\alpha}}\dd z.
\end{equation*}
With the special choice of $\omega$ in \eqref{eq:MOC-family}, \cite[Lemmas~3.4 and~3.5]{li2024global} (with $\mu=\alpha/2$)
give the following bounds
\begin{equation}
    D_\alpha(\xi)\geq c_D
    \begin{cases}
        \delta\lambda^{-1-\frac{\alpha}{2}}\xi^{1-\frac{\alpha}{2}},
        &0<\xi\leq\lambda,\\[4pt]
        \omega(\xi)\xi^{-\alpha},
        &\xi>\lambda,
    \end{cases}
    \label{eq:MOC-D-bound}
\end{equation}
and
\begin{equation}
    A_{\alpha}(\xi)\leq C
    \begin{cases}
        \delta\lambda^{-\frac{\alpha}{2}}\xi^{-\frac{\alpha}{2}},
        &0<\xi\leq\lambda,\\[4pt]
        \delta\xi^{-\alpha},
        &\xi>\lambda.
    \end{cases}
    \label{eq:MOC-A-bound}
\end{equation}

Next, since $\G=\nabla\Lambda^{-\alpha}(u-\mean u)$ obeys $\omega_2$, we derive a modulus $\Omega$ for the velocity field $u$, using the relation
\begin{equation*}
    u-\mean{u} = -\sum_{j=1}^d \partial_j\Lambda^{\alpha-2}\Gamma_j.
\end{equation*}
Indeed, following \cite[Equation~(4.49)]{do2018global}, we have
\begin{equation}
\Omega(\xi,t) = C\left(
        \int_0^\xi\frac{\omega_2(\eta,t)}{\eta^\alpha}\dd \eta
        +\xi\int_\xi^\infty\frac{\omega_2(\eta,t)}{\eta^{1+\alpha}}\dd \eta
    \right).
\label{eq:MOC-u}	
\end{equation}
For the modulus \eqref{eq:MOC-omega12}, direct calculation gives
\begin{equation}
    \Omega(\xi,t) \leq C e^{-\kappa t}
    \begin{cases}
        \delta\lambda^{-\alpha}\xi,
        &0<\xi\leq\lambda,\\[4pt]
        \omega(\xi)\xi^{1-\alpha},
        &\xi>\lambda.
    \end{cases}
    \label{eq:MOC-Omega-bound}
\end{equation}
Letting $\xi\to0$ in \eqref{eq:MOC-Omega-bound} gives
\begin{equation}
  \norm{\nabla u(t)}{L^\infty}
  \leq C\delta\lambda^{-\alpha}e^{-\kappa t},
  \qquad 0\leq t\leq t_1.
  \label{eq:MOC-grad-u}
\end{equation}

Finally, we record the control of the transported quantity $F=G/\rho$.
Set
\begin{equation*}
    H := \frac{\partial_1F}{\rho}.
\end{equation*}
Then, $H$ satisfies the transport equation
\begin{equation*}
    (\partial_t+u\partial_1)H=0.
\end{equation*}
Therefore
\begin{equation*}
    \norm{\partial_1F(t)}{L^\infty}
    \leq
    \overline\rho
    \norm{H_0}{L^\infty}.
\end{equation*}
Differentiating \eqref{eq:F-transport} gives
\[
    (\partial_t+u\partial_1)\nabla F
    =
    -\nabla u\,\partial_1F,
\]
and hence, using \eqref{eq:MOC-grad-u},
\begin{equation}
\begin{split}
    \norm{\nabla F(t)}{L^\infty}
    &\leq\norm{\nabla F_0}{L^\infty}
    +\overline\rho\norm{H_0}{L^\infty}
        \int_0^t\norm{\nabla u(s)}{L^\infty}\dd s\\
    &\leq\norm{\nabla F_0}{L^\infty}
    +C \kappa^{-1}\delta\lambda^{-\alpha},
    \qquad
    0\leq t\leq t_1.
\end{split}
    \label{eq:MOC-grad-F}
\end{equation}
Since $G=F\rho$, the simultaneous modulus assumptions give
\begin{equation}
\begin{split}
    \norm{\nabla G(t)}{L^\infty}
    &\leq
    \overline\rho
    \norm{\nabla F(t)}{L^\infty}
    +
    \norm{F_0}{L^\infty}
    \norm{\nabla\rho(t)}{L^\infty}
    \\
    &\leq
    C\lambda^{-1},
    \qquad
    0\leq t\leq t_1.
\end{split}
\label{eq:MOC-grad-G}
\end{equation}
In the last inequality we used $0<\alpha<1$ and
$0<\delta,\lambda<1$.  Both bounds are independent of the
putative breakthrough time $t_1$; this is where the exponential factor
in \eqref{eq:MOC-grad-u} is essential.

\subsection{Evolution of the modulus of continuity of the density}
\label{subsec:MOC-rho}

The breakthrough estimate below follows the same argument as in \cite{li2024global}.
In the breakthrough estimates below, omitted time arguments are understood
to be $t_1$, and time derivatives are evaluated at $t=t_1$, with $x,y$ fixed.
Starting from the dynamics
\[
    \partial_t\rho = -\rho\Lambda^\alpha\rho - F\rho^2 - u\partial_1\rho,
\]
we decompose
\begin{align*}
    \partial_t\rho(x)-\partial_t\rho(y)
    &=
    -\rho(y)\bigl(\Lambda^\alpha\rho(x)-\Lambda^\alpha\rho(y)\bigr)
    -(\rho(x)-\rho(y))\partial_1u(x)
    -\rho(y)F(x)(\rho(x)-\rho(y))
    \\
    &\quad
    -\rho(y)^2(F(x)-F(y))
    -\bigl((u\partial_1\rho)(x)-(u\partial_1\rho)(y)\bigr).
\end{align*}
Applying \eqref{eq:D-bound}, \eqref{eq:A-bound},
\eqref{eq:MOC-grad-F} and the velocity modulus
\eqref{eq:MOC-u}, we obtain
\begin{align}
    \partial_t\rho(x,t_1) -\partial_t\rho(y,t_1) \le
    &
    -\underline\rho D_\alpha(\xi)+
    \omega(\xi)
    \left(
        A_\alpha(\xi)
        +2\overline\rho
        \norm{F_0}{L^\infty}
    \right)
    +
    C
    \left(
        1+\kappa^{-1}\delta\lambda^{-\alpha}
    \right)\xi
    +
    \Omega(\xi)\omega'(\xi).
\label{eq:MOC-rho-general}
\end{align}

For $0<\xi<\lambda$, we have $\omega(\xi)\leq\delta\xi/\lambda$
and $\omega'(\xi)\leq\delta/\lambda$.  By
\eqref{eq:MOC-Omega-bound}, the transport term satisfies
\[
  \Omega(\xi)\omega'(\xi)
  \leq Ce^{-\kappa t_1}\delta^2\lambda^{-1-\alpha}\xi
  \leq C\delta\bigl(\delta\lambda^{-1-\alpha/2}
                         \xi^{1-\alpha/2}\bigr).
\]
Using \eqref{eq:MOC-D-bound} and \eqref{eq:MOC-A-bound} in
\eqref{eq:MOC-rho-general}, we obtain
\begin{equation}
  \partial_t\rho(x,t_1)-\partial_t\rho(y,t_1)
  \leq\delta\lambda^{-1-\alpha/2}\xi^{1-\alpha/2}
  \Big(-c_D\underline\rho+C\delta+C\big(\lambda^\alpha+\delta^{-1}\lambda^{1+\alpha} +\kappa^{-1}\lambda\big)\Big).
\label{eq:MOC-rho-small}
\end{equation}
For $\lambda<\xi\leq\Xi_1\leq E\lambda$, we use
$\omega'(\xi)=\delta/(2\xi)$ and $\omega(\xi)\geq3\delta/4$.
The transport term now satisfies
\[
  \Omega(\xi)\omega'(\xi)
  \leq Ce^{-\kappa t_1}\delta\omega(\xi)\xi^{-\alpha}
  \leq C\delta\omega(\xi)\xi^{-\alpha}.
\]
The same bounds on $D_\alpha$ and $A_\alpha$ then give
\begin{equation}
  \partial_t\rho(x,t_1)-\partial_t\rho(y,t_1)
  \leq \omega(\xi)\xi^{-\alpha}
  \Big(-c_D\underline\rho+C\delta+C_E\big(\lambda^\alpha+\delta^{-1}\lambda^{1+\alpha}+\kappa^{-1}\lambda\big)\Big).
\label{eq:MOC-rho-large}
\end{equation}
Both \eqref{eq:MOC-rho-small} and \eqref{eq:MOC-rho-large} can be negative once $\delta$ is chosen sufficiently small and then $\lambda$ is chosen sufficiently small. Thus the density breakthrough is ruled out.

\subsection{Evolution of the modulus of continuity of \texorpdfstring{$\G$}{Gamma}}
\label{subsec:MOC-Gamma}

Suppose that \eqref{eq:MOC-Gamma-break} holds at $t=t_1$. Set
\begin{equation*}
    \vartheta:=\frac{\G(x,t_1)-\G(y,t_1)}
                       {|\G(x,t_1)-\G(y,t_1)|},
    \qquad q(z,t):=\vartheta\cdot\G(z,t).
\end{equation*}
We keep $\vartheta$ fixed and suppress the time variable $t_1$ below.
Then $q$ obeys the non-strict modulus $\omega_2$ and
\begin{equation*}
    q(x)-q(y)=\omega_2(\xi).
\end{equation*}
The downward jump of $\omega_2'$ at $\lambda$ excludes a contact at
$\xi=\lambda$ for the smooth function $q$.
Taking the difference of \eqref{eq:Gamma-equation} at $x$ and $y$
and projecting onto $\vartheta$, we obtain
\begin{align*}
    \partial_t|\G(x,t_1)-\G(y,t_1)|
    =&-\rho(y)\bigl(\Lambda^\alpha q(x)-\Lambda^\alpha q(y)\bigr)
        -(\rho(x)-\rho(y))\Lambda^\alpha q(x)\\
       &-\bigl((u\partial_1q)(x)-(u\partial_1q)(y)\bigr)
        +\vartheta\cdot(f(x)-f(y)).
\end{align*}
If $\rho(x)\geq\rho(y)$, we apply \eqref{eq:D-bound} and
\eqref{eq:A-bound} to $e^{\kappa t_1}q$. If $\rho(x)<\rho(y)$,
we instead write the first two terms as
\[
    -\rho(x)\bigl(\Lambda^\alpha q(x)-\Lambda^\alpha q(y)\bigr)
    +(\rho(y)-\rho(x))\Lambda^\alpha q(y),
\]
and use $\Lambda^\alpha q(y)\leq e^{-\kappa t_1}A_\alpha(\xi)$,
which follows by applying \eqref{eq:A-bound} to $-e^{\kappa t_1}q$
at the reversed contact. Thus, in both cases,
\begin{equation}
\begin{split}
    \partial_t|\G(x,t_1)-\G(y,t_1)|
    \leq{}&e^{-\kappa t_1}\Big(-\underline\rho D_\alpha(\xi)+\omega(\xi)\max\{A_\alpha(\xi),0\}+\Omega(\xi)\omega'(\xi)\Big)+|f(x)-f(y)|.
\end{split}
\label{eq:MOC-Gamma-general}
\end{equation}

The following proposition provides a key estimate on the commutator. The proof is postponed to \cref{subsec:MOC-commutator}.
\begin{proposition}
\label{prop:MOC-commutator}
Suppose that the simultaneous modulus inequalities hold non-strictly
on $[0,t_1]$, with $0<\delta,\lambda<1$ and $E\lambda<1/2$.
For $f$ in \eqref{eq:Gamma-equation} and $\xi=|x-y|$, we have
\begin{equation}
    |f(x,t_1)-f(y,t_1)|\leq Ce^{-\kappa t_1}
    \begin{cases}
        \delta\lambda^{-1}\xi,
        &0<\xi<\lambda,\\[4pt]
        \omega(\xi),
        &\lambda<\xi\leq E\lambda.
    \end{cases}
    \label{eq:MOC-comm-bound}
\end{equation}
Here $C$ depends only on the initial data, the structural parameters,
and $E$, and is independent of $\lambda$ and $t_1$.
\end{proposition}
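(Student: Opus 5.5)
We write the commutator as an integral with a kernel. Let $K_k$ be the periodic kernel of $\mathcal R_{k,\alpha}=\partial_k\Lambda^{-\alpha}$. It is odd, of order $|z|^{\alpha-d-1}$ near the origin, and smooth away from it. Then
\[
  f_k(x)=-\comm{\mathcal R_{k,\alpha}}{u}G(x)=\mathrm{p.v.}\int K_k(x-z)\bigl(u(x)-u(z)\bigr)G(z)\dd z.
\]
The kernel $K_k(x-z)(u(x)-u(z))$ is locally integrable, since $|u(x)-u(z)|\lesssim\Omega(|x-z|)$ and $0<\alpha<1$. Hence $f$ is an absolutely convergent integral once $\nabla u$ is controlled. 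We estimate $f(x)-f(y)$ by the standard near/far splitting used for commutators in modulus arguments, as in \cite{kiselev2007global,li2024global}.

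Set $\xi=|x-y|$ and $B=B(x,2\xi)$. On the near region,
\[
  \int_B K_k(x-z)(u(x)-u(z))G(z)\dd z
\]
and the analogous $y$-term over $B(y,3\xi)$ are each bounded by
\[
  G_*\int_{|h|\le3\xi}|h|^{\alpha-d-1}\,\Omega(|h|)\dd h .
\]
For $\xi<\lambda$, \eqref{eq:MOC-Omega-bound} gives $\Omega(h)\le Ce^{-\kappa t_1}\delta\lambda^{-\alpha}h$, so this term is at most $Ce^{-\kappa t_1}\delta\lambda^{-\alpha}\xi^{\alpha}$. This is \emph{not} bounded by $\delta\lambda^{-1}\xi$ for small $\xi$, so the crude bound is insufficient near the origin. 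Instead, we keep the oddness of $K_k$. We write
\[
  u(x)-u(z)=-\nabla u(x)\cdot(z-x)+r(x,z),
\]
and use the cancellation of the odd kernel on symmetric balls. The linear part then produces $\nabla u(x)\,\mathrm{p.v.}\!\int_{B}K_k(x-z)(z-x)(G(z)-G(x))\dd z$, which involves the increment of $G$. By \eqref{eq:MOC-grad-G} and \eqref{eq:MOC-grad-u}, it is bounded by
\[
  C\delta\lambda^{-\alpha}e^{-\kappa t_1}\cdot\lambda^{-1}\xi^{\alpha+1}\le Ce^{-\kappa t_1}\delta\lambda^{-1}\xi .
\]
The last step uses $\xi^{\alpha}\le\lambda^{\alpha}$ when $\xi<\lambda$.

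The remainder $r$ needs a modulus for $\nabla u$, which we do not have. The cleaner route is to rewrite the near part as
\[
  \int_B K_k(x-z)\bigl(u(x)-u(z)\bigr)\bigl(G(z)-G(x)\bigr)\dd z+G(x)\,\mathrm{p.v.}\!\int_B K_k(x-z)\bigl(u(x)-u(z)\bigr)\dd z .
\]
The first integral is bounded by $\|\nabla u\|_\infty\|\nabla G\|_\infty\,\xi^{1+\alpha}$, which is acceptable. The second is a truncated $\mathcal R_{k,\alpha}u$-type expression and is treated together with the corresponding far-field term. More precisely, the full integral equals $u\,\mathcal R_{k,\alpha}G-\mathcal R_{k,\alpha}(uG)$. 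We therefore split instead as
\[
  f_k=-u\,\mathcal R_{k,\alpha}G+\mathcal R_{k,\alpha}(uG)
\]
and exploit that $G(z)-G(x)$ is Lipschitz with constant $C\lambda^{-1}$.

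\textbf{Far region.} For $|z-x|\ge2\xi$, we write the difference as
\[
  \int_{B^c}\bigl[K_k(x-z)-K_k(y-z)\bigr](u(x)-u(z))G(z)\dd z+(u(x)-u(y))\int_{B^c}K_k(y-z)G(z)\dd z .
\]
By the mean value theorem, the first integral is bounded by
\[
  G_*\,\xi\int_{2\xi}^{\infty}\Omega(\eta)\,\eta^{\alpha-3}\,\eta^{d-1}\eta^{-d+1}\dd\eta .
\]
This is finite because $\Omega(\eta)\lesssim\eta$ near zero and $\Omega$ grows sublinearly. It yields $Ce^{-\kappa t_1}\delta\lambda^{-\alpha}\xi^{\alpha}$-type contributions. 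For the second integral, we subtract $G(y)$ using the odd cancellation on the annulus. The truncated integral is then at most $\|\nabla G\|_\infty\int_{2\xi}^{1}\eta^{\alpha-1}\dd\eta+C\le C\lambda^{-1}$. Multiplying by $|u(x)-u(y)|\le\Omega(\xi)\le Ce^{-\kappa t_1}\delta\lambda^{-\alpha}\xi$ gives $Ce^{-\kappa t_1}\delta\lambda^{-1-\alpha}\xi$.

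That last bound is too large by the factor $\lambda^{-\alpha}$. \textbf{This is where I expect the main obstacle.} The fix I would try first is not to use $\|\nabla G\|_\infty\lesssim\lambda^{-1}$ at distances $\eta\ge\lambda$, and to use only $G_*$ there. The annulus integral then splits as follows:
\[
  \lambda^{-1}\int_{2\xi}^{\lambda}\eta^{\alpha-1}\dd\eta+G_*\int_{\lambda}^{1}\eta^{\alpha-d-1}\eta^{d-1}\dd\eta\lesssim\lambda^{\alpha-1}+\lambda^{-\alpha}.
\]
Even this is marginal. Accordingly, I would also use the $x_1$ structure: $\partial_1 u=\Lambda^\alpha\rho+G$, together with $\Gamma$ obeying $\omega_2$, should give the sharper increment bound $|u(x)-u(y)|\le\Omega(\xi)$ with $\Omega\le Ce^{-\kappa t_1}\omega(\xi)\xi^{1-\alpha}$, to be exploited in place of the Lipschitz bound.

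For the regime $\lambda<\xi\le E\lambda$, all scales are comparable to $\lambda$ up to powers of $E$. The same splitting with $\Omega(\xi)\le Ce^{-\kappa t_1}\omega(\xi)\xi^{1-\alpha}$, $\|\nabla G\|_\infty\le C\lambda^{-1}\le C_E\xi^{-1}$ and $G_*$ bounds gives contributions of the form $C_Ee^{-\kappa t_1}\omega(\xi)$. The $E$-dependence is absorbed into $C$, as allowed by the statement.
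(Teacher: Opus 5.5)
There is a genuine gap, in the regime $0<\xi<\lambda$. You use the modulus of $\Gamma$ only indirectly, through the derived velocity modulus $\Omega$, and never the increment bound $|\Gamma(x)-\Gamma(y)|\le\omega_2(\xi,t_1)$ itself. Without it, the target $Ce^{-\kappa t_1}\delta\lambda^{-1}\xi$ is out of reach.

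In your splitting, every piece in which $G$ is effectively frozen at a point is controlled only through $\Omega(\eta)\le Ce^{-\kappa t_1}\delta\lambda^{-\alpha}\eta$. Such pieces are individually of size $\delta\lambda^{-\alpha}\xi^{\alpha}$, which exceeds $\delta\lambda^{-1}\xi$ by the factor $(\lambda/\xi)^{1-\alpha}$. Already for $G$ constant and $u$ locally linear, the near-ball piece is exactly of order $|\partial_ku(x)|\xi^{\alpha}$. You noticed this in the near ball, but the same size appears, unflagged, in two more places:
\begin{itemize}
\item In your first far-field term, the bound $G_*\xi\int_{2\xi}^\infty\Omega(\eta)\eta^{\alpha-3}\dd\eta$ is dominated by $\eta\sim\xi$ and is of order $\delta\lambda^{-\alpha}\xi^{\alpha}$.
\item In your second far-field term, the region $\{|z-x|\ge2\xi\}$ is not symmetric about $y$. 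Subtracting $G(y)$ therefore leaves $G(y)\int_{|z-x|\ge2\xi}K_k(y-z)\dd z=O(\xi^{\alpha-1})$, which after multiplication by $\Omega(\xi)$ is once more $\delta\lambda^{-\alpha}\xi^{\alpha}$.
\end{itemize}

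By contrast, the loss you single out as the main obstacle is an arithmetic slip. The far part is $G_*\int_\lambda^\infty\eta^{\alpha-2}\dd\eta\le C\lambda^{\alpha-1}$, not $\lambda^{-\alpha}$. So the symmetric annulus integral is $O(\lambda^{\alpha-1})$, and $\Omega(\xi)\lambda^{\alpha-1}\le Ce^{-\kappa t_1}\delta\lambda^{-1}\xi$. Your proposed remedy would not work either. The relation $\partial_1u=\Lambda^\alpha\rho+G$ says nothing about transverse derivatives. Moreover, a bound $\Omega(\xi)\le C\omega(\xi)\xi^{1-\alpha}\sim\delta\lambda^{-1}\xi^{2-\alpha}$ for $\xi<\lambda$ would force $\nabla u\equiv0$; the linear bound \eqref{eq:MOC-Omega-bound} on $(0,\lambda]$ cannot be improved.

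The missing idea is that the frozen-$G$ pieces are not small separately but recombine exactly into an increment of $\Gamma$. If $G\equiv g$ were constant, then $f_k=-g\,\mathcal R_{k,\alpha}u=-g\Gamma_k$. Hence $|f_k(x)-f_k(y)|\le|g|\,\omega_2(\xi,t_1)\le G_*e^{-\kappa t_1}\delta\lambda^{-1}\xi$ by the simultaneous modulus hypothesis. The paper builds the estimate around this:
\begin{itemize}
\item It writes $f_k(x)=-\int J_k(z)(u(x-z)-u(x))G(x-z)\dd z$ and splits $f_k(x)-f_k(y)=I_1+I_2$. Here $I_1$ pairs the second difference $u(x-z)-u(y-z)-u(x)+u(y)$ with $G(y-z)$, and $I_2$ pairs $u(x-z)-u(x)$ with $G(x-z)-G(y-z)$.
\item Writing $G(y-z)=G(y)+(G(y-z)-G(y))$ in $I_1$ produces exactly $-G(y)(\Gamma_k(x)-\Gamma_k(y))$. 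The remainder carries the factor $|G(y-z)-G(y)|\le C\min\{|z|/\lambda,1\}$.
\item Combined with $|u(x-z)-u(y-z)-u(x)+u(y)|\le2\min\{\Omega(\xi),\Omega(|z|)\}$, every remainder piece is acceptable. A typical one is $\lambda^{-1}\Omega(\xi)\int_\xi^\lambda r^{\alpha-1}\dd r\le Ce^{-\kappa t_1}\delta\lambda^{-1}\xi$.
\item In $I_2$ the increment of $G$ supplies the factor $\xi$. The near field $|z|\le\lambda$ uses $\norm{\nabla G}{L^\infty}\le C\lambda^{-1}$. The far field is integrated by parts so that only $G_*$ and $\norm{\nabla u}{L^\infty}\le C\delta\lambda^{-\alpha}e^{-\kappa t_1}$ enter, giving $C\xi\lambda^{\alpha-1}\norm{\nabla u}{L^\infty}$.
\end{itemize}

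Your rewriting with $G(x)\,\mathrm{p.v.}\int_BK_k(x-z)(u(x)-u(z))\dd z$, a truncation of $-G(x)\Gamma_k(x)$, was the right instinct. What is missing is to collect all such terms into the $\Gamma$-increment and bound it by $\omega_2$, rather than by $\norm{\nabla u}{L^\infty}$. For $\lambda<\xi\le E\lambda$ your crude bounds do suffice, since there $\delta\lambda^{-\alpha}\xi^\alpha\le C_E\omega(\xi)$.
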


We now apply these estimates in the two regions.

For $0<\xi<\lambda$, \eqref{eq:MOC-Omega-bound} gives
\begin{equation}
    \Omega(\xi)\omega'(\xi)
    \leq Ce^{-\kappa t_1}\delta^2\lambda^{-1-\alpha}\xi
    \leq C\delta^2\lambda^{-1-\alpha/2}\xi^{1-\alpha/2}.
    \label{eq:MOC-Gamma-transport-small}
\end{equation}
The time derivative of the modulus contributes
\begin{equation}
    \kappa\omega_2(\xi,t_1)
    \leq\kappa e^{-\kappa t_1}\delta\lambda^{-1}\xi
    \leq\kappa\lambda^\alpha
           e^{-\kappa t_1}\delta\lambda^{-1-\alpha/2}\xi^{1-\alpha/2}.
    \label{eq:MOC-time-small}
\end{equation}
Applying \eqref{eq:MOC-D-bound}, \eqref{eq:MOC-A-bound}, \eqref{eq:MOC-comm-bound},  \eqref{eq:MOC-Gamma-transport-small}, and \eqref{eq:MOC-time-small} to \eqref{eq:MOC-Gamma-general}, we obtain
\begin{equation}
    \partial_t|\G(x,t_1)-\G(y,t_1)| +\kappa\omega_2(\xi,t_1)
    \leq e^{-\kappa t_1}\delta\lambda^{-1-\alpha/2}\xi^{1-\alpha/2}
       \left(-c_D\underline\rho+C\delta
             +C_E\lambda^\alpha+\kappa\lambda^\alpha\right).
\label{eq:MOC-Gamma-final-small}
\end{equation}

For $\lambda<\xi\leq\Xi_2\leq E\lambda$, we have
\begin{equation}
    \Omega(\xi)\omega'(\xi)
    \leq Ce^{-\kappa t_1}\delta\omega(\xi)\xi^{-\alpha}
    \leq C\delta\omega(\xi)\xi^{-\alpha},
    \label{eq:MOC-Gamma-transport-large}
\end{equation}
and
\begin{equation}
    \kappa\omega_2(\xi,t_1)
    \leq E^\alpha\kappa\lambda^\alpha
               e^{-\kappa t_1}\omega(\xi)\xi^{-\alpha}.
    \label{eq:MOC-time-large}
\end{equation}
Using \eqref{eq:MOC-D-bound}, \eqref{eq:MOC-A-bound},
\eqref{eq:MOC-comm-bound}, \eqref{eq:MOC-Gamma-transport-large}, and \eqref{eq:MOC-time-large} in \eqref{eq:MOC-Gamma-general} gives
\begin{equation}
	\partial_t|\G(x,t_1)-\G(y,t_1)| +\kappa\omega_2(\xi,t_1)
    \leq e^{-\kappa t_1}\omega(\xi)\xi^{-\alpha}
       \left(-c_D\underline\rho+C\delta
             +C_E\lambda^\alpha+C_E\kappa\lambda^\alpha\right).
\label{eq:MOC-Gamma-final-large}
\end{equation}
Both \eqref{eq:MOC-Gamma-final-small} and
\eqref{eq:MOC-Gamma-final-large} are negative once $\delta$ is chosen
sufficiently small and then $\lambda$ is chosen sufficiently small.
Thus \eqref{eq:MOC-Gamma-sign} holds and the $\G$ breakthrough is ruled out.

\subsection{The estimate on the commutator}
\label{subsec:MOC-commutator}

Now, we focus on the estimate of the commutator term $f$, proving \cref{prop:MOC-commutator}. Recall the kernel representation
\begin{equation}
    f_k(x)=-\bigl(\comm{\mathcal R_{k,\alpha}}{u}G\bigr)(x)
    =-\int_{\R^d}J_k(z)\bigl(u(x-z)-u(x)\bigr)G(x-z)\dd z,
    \label{eq:MOC-comm-kernel}
\end{equation}
where $J_k$ is the Euclidean kernel of $\partial_k\Lambda^{-\alpha}$,
with $|J_k(z)|\leq C|z|^{-d-1+\alpha}$ and
$|\nabla J_k(z)|\leq C|z|^{-d-2+\alpha}$, and $u,G$ are extended
periodically to $\R^d$. This representation agrees with the periodic
Fourier multiplier. The integral is absolutely convergent: smoothness
controls the singularity at zero since $\alpha>0$, and boundedness of
the periodic functions controls infinity since $\alpha<1$.

Fix $t=t_1$ and suppress the time variable. For $\xi=|x-y|\leq E\lambda<1/2$,
adding and subtracting a mixed term in \eqref{eq:MOC-comm-kernel} gives
\begin{align*}
    f_k(x) - f_k(y) = & -\int_{\R^d}J_k(z)
       \Big(u(x-z)-u(y-z)-u(x)+u(y)\Big)G(y-z)\dd z\\
    &-\int_{\R^d}J_k(z)
       \bigl(u(x-z)-u(x)\bigr)\bigl(G(x-z)-G(y-z)\bigr)\dd z
    =:I_1 + I_2.
\end{align*}

For $I_1$, we further write $G(y-z)=G(y)+(G(y-z)-G(y))$ and obtain
\begin{align*}
    I_1={}&-G(y)\bigl(\Gamma_k(x)-\Gamma_k(y)\bigr)
    -\int_{\R^d}J_k(z)\Big(u(x-z)-u(y-z)-u(x)+u(y)\Big)\bigl(G(y-z)-G(y)\bigr)\dd z.
\end{align*}
Observe that
\[
    |u(x-z)-u(y-z)-u(x)+u(y)|
    \leq2\min\big\{\Omega(\xi),\Omega(|z|)\big\}.
\]
Moreover, by \eqref{eq:MOC-grad-G} and $\norm{G}{L^\infty}\leq G_*$, we have
\[
    |G(y-z)-G(y)|\leq C\min\{|z|/\lambda,1\}.
\]
Taking absolute values and integrating radially, we obtain
\begin{equation}
    |I_1|\leq G_*\omega_2(\xi,t_1)
       +C\int_0^\infty
          \min\{r/\lambda,1\}\min\{\Omega(r),\Omega(\xi)\}
          r^{\alpha-2}\dd r.
    \label{eq:MOC-comm-first}
\end{equation}

For $0<\xi<\lambda$, splitting the integral in
\eqref{eq:MOC-comm-first} at $\xi$ and $\lambda$, and using \eqref{eq:MOC-Omega-bound} gives
\begin{align*}
    |I_1|
    &\leq Ce^{-\kappa t_1}\delta\lambda^{-1}\xi
       +C\left(
          \lambda^{-1}\int_0^\xi\Omega(r)r^{\alpha-1}\dd r
          +\lambda^{-1}\Omega(\xi)\int_\xi^\lambda r^{\alpha-1}\dd r
          +\Omega(\xi)\int_\lambda^\infty r^{\alpha-2}\dd r
       \right)\\
    &\leq Ce^{-\kappa t_1}\delta\left(
          \lambda^{-1}\xi+\lambda^{-1-\alpha}\xi^{1+\alpha}\right)
     \leq Ce^{-\kappa t_1}\delta\lambda^{-1}\xi.
\end{align*}
For $\lambda<\xi\leq E\lambda$, we instead split
\eqref{eq:MOC-comm-first} at $\lambda$ and $\xi$:
\begin{align*}
    |I_1|
    &\leq G_*\omega_2(\xi,t_1)
       +C\left(
          \lambda^{-1}\int_0^\lambda\Omega(r)r^{\alpha-1}\dd r
          +\int_\lambda^\xi\Omega(r)r^{\alpha-2}\dd r
          +\Omega(\xi)\int_\xi^\infty r^{\alpha-2}\dd r
       \right)\\
    &\leq Ce^{-\kappa t_1}\left(
          \omega(\xi)+\delta+\int_\lambda^\xi\frac{\omega(r)}r\dd r\right)
     \leq C_Ee^{-\kappa t_1}\omega(\xi).
\end{align*}
Here we have used $\omega(\xi)\geq3\delta/4$ and $\xi/\lambda\leq E$.

For $I_2$, we split the integral at $|z|=\lambda$. In the near field,
\eqref{eq:MOC-grad-G} and the velocity modulus give
\[
    \left|\int_{|z|\leq\lambda}J_k(z)
       \bigl(u(x-z)-u(x)\bigr)\bigl(G(x-z)-G(y-z)\bigr)\dd z\right|
    \leq C\lambda^{-1}\xi\int_0^\lambda\Omega(r)r^{\alpha-2}\dd r.
\]
In the far field, we transfer the difference of $G$ onto the kernel
and the velocity increment. Choose shortest lifts of $x,y$. Then
\[
    G(x-z)-G(y-z)
    =-\int_0^1(x-y)\cdot\nabla_zG\bigl(y+s(x-y)-z\bigr)\dd s.
\]
Integrating by parts on $\{|z|>\lambda\}$ yields
\begin{align*}
    &\left|\int_{|z|>\lambda}J_k(z)
       \bigl(u(x-z)-u(x)\bigr)\bigl(G(x-z)-G(y-z)\bigr)\dd z\right|\\
    &\quad\leq G_*\xi\left[
       \int_{|z|=\lambda}|J_k(z)|\,|u(x-z)-u(x)|\dd S(z)
       +\int_{|z|>\lambda}
          \left|\nabla_z\bigl(J_k(z)(u(x-z)-u(x))\bigr)\right|\dd z
       \right]\\
    &\quad\leq CG_*\xi\norm{\nabla u}{L^\infty}
       \left(\lambda^{\alpha-1}+\int_\lambda^\infty r^{\alpha-2}\dd r\right)
     \leq C\xi\lambda^{\alpha-1}\norm{\nabla u}{L^\infty}.
\end{align*}
Here we used $|u(x-z)-u(x)|\leq\norm{\nabla u}{L^\infty}|z|$.
The boundary term at infinity vanishes, since the velocity increment
is bounded by $V(t_1)$ and $\alpha<1$. Combining the two regions gives
\begin{equation}
    |I_2|\leq C\lambda^{-1}\xi\int_0^\lambda\Omega(r)r^{\alpha-2}\dd r
       +C\xi\lambda^{\alpha-1}\norm{\nabla u}{L^\infty}.
    \label{eq:MOC-comm-second}
\end{equation}

For $0<\xi<\lambda$, \eqref{eq:MOC-Omega-bound},
\eqref{eq:MOC-grad-u}, and \eqref{eq:MOC-comm-second} give
\begin{equation}
    |I_2| \leq Ce^{-\kappa t_1}\delta\xi\left(
        \lambda^{-1-\alpha}\int_0^\lambda r^{\alpha-1}\dd r
        +\lambda^{\alpha-1}\lambda^{-\alpha}\right)
    \leq Ce^{-\kappa t_1}\delta\lambda^{-1}\xi.
\label{eq:MOC-I2-bound}
\end{equation}
For $\lambda<\xi\leq E\lambda$, the estimate \eqref{eq:MOC-I2-bound} is valid for every $\xi$ and gives
\[
    |I_2|\leq Ce^{-\kappa t_1}\delta\frac\xi\lambda
       \leq C_Ee^{-\kappa t_1}\omega(\xi).
\]

Combining all the bounds proves \eqref{eq:MOC-comm-bound}, and hence \cref{prop:MOC-commutator}.

\subsection{Closing the simultaneous modulus argument}
\label{subsec:MOC-close}

We choose the parameters in the same order as in
\cite[Section~3]{li2024global}.  First fix $\delta>0$ sufficiently
small that all the $C\delta$ terms in
\eqref{eq:MOC-rho-small}, \eqref{eq:MOC-rho-large},
\eqref{eq:MOC-Gamma-final-small}, and
\eqref{eq:MOC-Gamma-final-large} satisfy
\begin{equation*}
    C\delta\leq\frac14c_D\underline\rho.
\end{equation*}
Then $E$ in \eqref{eq:MOC-E} is fixed.  Choose $\lambda>0$
sufficiently small that
\begin{enumerate}
\item[(i)]
the initial data $\rho_0$ and $\G_0$ obey $\omega_\lambda^\delta$;
\item[(ii)]
$E\lambda<1/2$;
\item[(iii)]
the lower-order density terms satisfy
\[
    C_E\left(
        \lambda^\alpha+\delta^{-1}\lambda^{1+\alpha}
        +\kappa^{-1}\lambda
    \right)
    \leq\frac14c_D\underline\rho;
\]
\item[(iv)]
the commutator and the derivative of the time-dependent modulus satisfy
\[
    C_E\lambda^\alpha
    (1+\kappa)
    \leq\frac14c_D\underline\rho.
\]
\end{enumerate}
Such a choice is possible since $\alpha>0$, with $\delta$ and $E$ already fixed.
All choices depend only on the initial data and the structural
parameters, and are independent of $t_1$ or any finite time horizon.

With these choices, \eqref{eq:MOC-rho-small} and
\eqref{eq:MOC-rho-large} imply \eqref{eq:MOC-rho-sign}, while
\eqref{eq:MOC-Gamma-final-small} and
\eqref{eq:MOC-Gamma-final-large} imply \eqref{eq:MOC-Gamma-sign}.
Neither breakthrough can therefore occur.  We have proved, for every
$0\leq t<T_*$ and $x\neq y$,
\[
    |\rho(x,t)-\rho(y,t)|<\omega_1(|x-y|),
    \qquad
    |\G(x,t)-\G(y,t)|<\omega_2(|x-y|,t).
\]
In particular,
\begin{equation}
    \sup_{0\leq t<T_*}\norm{\nabla\rho(t)}{L^\infty}
    \leq\delta\lambda^{-1},
    \label{eq:MOC-final-rho}
\end{equation}
\begin{equation}
    [\G(t)]_{\mathrm{Lip}}
    \leq\delta\lambda^{-1}e^{-\kappa t},
    \qquad 0\leq t<T_*,
    \label{eq:MOC-final-Gamma}
\end{equation}
and \eqref{eq:MOC-grad-u} yields
\begin{equation}
    \norm{\nabla u(t)}{L^\infty}
    \leq C\delta\lambda^{-\alpha}e^{-\kappa t},
    \qquad 0\leq t<T_*.
    \label{eq:MOC-final-u}
\end{equation}
Since conservation of mass and momentum gives
$\norm{u(t)-\bar u}{L^\infty}\leq V(t)$,
\eqref{eq:velocity-alignment} also gives exponential
decay of $u-\bar u$ in $W^{1,\infty}$, at the same rate $\kappa$.
The uniform bounds also close the continuation argument in
\cref{sec:global}.

\section{Global regularity and asymptotic alignment}
\label{sec:global}

We complete the proof of \cref{thm:main} by verifying the
Beale--Kato--Majda-type continuation criterion in
\cref{prop:LWP}; see \cite[Theorem~1.1]{lear2021unidirectional}. We then prove exponential
convergence of the density to a traveling profile.
\begin{proof}[Proof of \cref{thm:main}]
Let $(\rho,u)$ be the maximal classical solution of
\cref{prop:LWP}, with existence time $T_*$.
Suppose, for contradiction, that $T_*<\infty$.
The coefficient bounds \eqref{eq:uniform-density-bounds} and
\eqref{eq:G-Linf} hold on $[0,T_*)$,
and \cref{prop:Gamma-decay} gives
\[
  \norm{\G(t)}{L^\infty}\leq C_\Gamma e^{-\kappa t},
  \qquad 0\leq t<T_*.
\]
Consequently the parameter choice in \cref{subsec:MOC-close} can
be made once on the full existence interval: first fix $\delta$, then $E$,
and finally a positive $\lambda$.  These choices depend only on the
initial data and the structural bounds, and are independent
of a putative breakthrough time $t_1<T_*$.
The argument of \cref{sec:MOC-Gamma} thus gives
\[
  \sup_{0\leq t<T_*}
  \left(\norm{\nabla\rho(t)}{L^\infty}
       +[\G(t)]_{\mathrm{Lip}}\right)
  \leq 2\delta\lambda^{-1}<\infty.
\]
Combining this with \eqref{eq:MOC-final-u} yields
\[
  \sup_{0\leq t<T_*}
  \left(\norm{\nabla\rho(t)}{L^\infty}
       +\norm{\nabla u(t)}{L^\infty}\right)<\infty.
\]
This contradicts \eqref{eq:local-blowup}.  Hence $T_*=\infty$.

Existence, uniqueness, and the Sobolev regularity
\eqref{eq:global-solution-class} now follow by continuation of the
local solution.  The uniform positive lower and upper density bounds
are given by \eqref{eq:uniform-density-bounds}.  The parameters in
\cref{subsec:MOC-close} are independent of time, so
\eqref{eq:MOC-final-rho} gives a uniform Lipschitz bound for $\rho$,
while \eqref{eq:MOC-final-Gamma} and \eqref{eq:MOC-final-u} give
\[
  [\G(t)]_{\mathrm{Lip}}+\norm{\nabla u(t)}{L^\infty}
  \leq Ce^{-\kappa t},\qquad t\geq0.
\]
Combining the velocity-gradient bound with
$\norm{u(t)-\bar u}{L^\infty}\leq V(t)$ and
\eqref{eq:velocity-alignment} proves \eqref{eq:main-velocity-alignment}.

For the density alignment, we follow Lear and Shvydkoy
\cite[Step~4.4, p.~827]{lear2021unidirectional}. Define the density in the moving frame by
\[
    \widetilde\rho(x,t):=\rho(x+\bar u t e_1,t).
\]
The continuity equation gives
\[
    \partial_t\widetilde\rho(x,t)
    =-\bigl[(u-\bar u)\partial_1\rho+\rho\partial_1u\bigr]
       (x+\bar u t e_1,t).
\]
Using \eqref{eq:uniform-density-bounds}, \eqref{eq:MOC-final-rho},
and \eqref{eq:main-velocity-alignment}, we obtain
\begin{equation*}
    \norm{\partial_t\widetilde\rho(t)}{L^\infty}
    \leq\norm{u(t)-\bar u}{L^\infty}\norm{\nabla\rho(t)}{L^\infty}
       +\norm{\rho(t)}{L^\infty}\norm{\nabla u(t)}{L^\infty}
    \leq Ce^{-\kappa t}.
\end{equation*}
Hence, for $s\geq t\geq0$,
\[
    \norm{\widetilde\rho(s)-\widetilde\rho(t)}{L^\infty}
    \leq C\int_t^s e^{-\kappa r}\dd r
    \leq\frac{C}{\kappa}e^{-\kappa t}.
\]
Thus $\widetilde\rho(t)$ converges uniformly to a profile $\rho_\infty$.
The uniform Lipschitz bound \eqref{eq:MOC-final-rho} passes to the limit,
so $\rho_\infty\in W^{1,\infty}(\T^d)$. The density bounds and conservation
of mass also pass to the limit. Letting $s\to\infty$ in the preceding
estimate and translating back to the original frame proves
\eqref{eq:main-density-alignment}, with the same rate $\kappa$.
\end{proof}

\appendix

\section{The periodic endpoint commutator estimate}
\label{app:periodic-commutator}

We give a short localization argument to deduce
\cref{lem:commutator} from the Euclidean commutator estimate.

\begin{proof}[Proof of \cref{lem:commutator}]
Set $s=1-\alpha\in(0,1)$ and fix $1<p<\infty$.
Write $A=\mathcal R_{k,\alpha}$ and let $A_{\mathrm E}$ be its
Euclidean counterpart, with symbol $i\xi_k|\xi|^{-\alpha}$.
The Euclidean estimate (see, e.g.,
\cite[Corollary~1.4(2)]{li2019kato}) gives
\begin{equation}
\norm{\comm{A_{\mathrm E}}{\tilde v}\tilde h}{L^p(\R^d)}
\leq
C\norm{\Lambda^s_{\mathrm E}\tilde v}{L^p(\R^d)}
\norm{\tilde h}{L^\infty(\R^d)},
\qquad
\tilde v,\tilde h\in C_c^\infty(\R^d),
\label{eq:Euclidean-one-term}
\end{equation}
where $\Lambda^s_{\mathrm E}=(-\Delta_{\R^d})^{s/2}$.

Let $v,h$ be smooth periodic functions, identified with their periodic
extensions. Choose $\chi\in C_c^\infty(\R^d)$ with $0\leq\chi\leq1$,
$\chi=1$ on $[-1,1]^d$, and support in $[-2,2]^d$.
Set $\chi_L(x)=\chi(x/L)$ and $Q_L=[-L,L]^d$, for $L\geq1$.
For $B=A_{\mathrm E}$ or $\Lambda^s_{\mathrm E}$, the difference kernel satisfies
$|K_B(z)|\leq C|z|^{-d-s}$. Hence, for every smooth periodic $g$,
Minkowski's inequality gives
\begin{align}
\norm{\comm{B}{\chi_L}g}{L^p(\R^d)}
&=
\norm{B(\chi_Lg)-\chi_LBg}{L^p(\R^d)}
\leq
C\norm g{L^\infty}
\int_{\R^d}
\frac{\norm{\chi_L-\chi_L(\cdot-z)}{L^p(\R^d)}}
{|z|^{d+s}}\dd z
\notag\\
&\leq
C\norm g{L^\infty}L^{d/p}
\int_0^\infty
\min\{r/L,1\}\frac{\dd r}{r^{1+s}}
\leq
C\norm g{L^\infty}L^{d/p-s}.
\label{eq:large-box-localization-global}
\end{align}
Here $Bg$, for periodic $g$, is defined through the absolutely convergent
difference integral and agrees with the corresponding periodic Fourier
multiplier. The same estimate holds with $\chi_L$ replaced by $\chi_L^2$.

On $\R^d$, we have the identity
\[
\chi_L^2\comm A v h
=
\comm{A_{\mathrm E}}{\chi_Lv}(\chi_Lh)
-\comm{A_{\mathrm E}}{\chi_L^2}(vh)
+\chi_Lv\comm{A_{\mathrm E}}{\chi_L}h.
\]
Since $\chi_L=1$ on $Q_L$ and $0\leq\chi_L\leq1$, the triangle inequality and
\eqref{eq:large-box-localization-global} with $B=A_{\mathrm E}$ give
\begin{align*}
\norm{\comm A v h}{L^p(Q_L)}
&\leq
\norm{\comm{A_{\mathrm E}}{\chi_Lv}(\chi_Lh)}{L^p(\R^d)}
+\norm{\comm{A_{\mathrm E}}{\chi_L^2}(vh)}{L^p(\R^d)}
+\norm v{L^\infty}
\norm{\comm{A_{\mathrm E}}{\chi_L}h}{L^p(\R^d)}
\\
&\leq
\norm{\comm{A_{\mathrm E}}{\chi_Lv}(\chi_Lh)}{L^p(\R^d)}
+
C\norm v{L^\infty}\norm h{L^\infty}L^{d/p-s}.
\end{align*}

For the first commutator, applying \eqref{eq:Euclidean-one-term} with
$\tilde v=\chi_Lv$ and $\tilde h=\chi_Lh$ yields
\[
\norm{\comm{A_{\mathrm E}}{\chi_Lv}(\chi_Lh)}{L^p(\R^d)}
\leq
C\norm{\Lambda^s_{\mathrm E}(\chi_Lv)}{L^p(\R^d)}
\norm h{L^\infty}.
\]
Using \eqref{eq:large-box-localization-global} with
$B=\Lambda^s_{\mathrm E}$, we further obtain
\[
\norm{\Lambda^s_{\mathrm E}(\chi_Lv)}{L^p(\R^d)}
\leq
\norm{\chi_L\Lambda^s_{\mathrm E}v}{L^p(\R^d)}
+
\norm{\comm{\Lambda^s_{\mathrm E}}{\chi_L}v}{L^p(\R^d)}
\leq
CL^{d/p}\norm{\Lambda^sv}{L^p(\T^d)}
+
C\norm v{L^\infty}L^{d/p-s},
\]
where we used the periodicity of $\Lambda^sv$ and the support property
$\operatorname{supp}\chi_L\subset[-2L,2L]^d$.

Combining the preceding estimates gives
\[
\norm{\comm A v h}{L^p(Q_L)}
\leq
CL^{d/p}
\norm{\Lambda^sv}{L^p(\T^d)}
\norm h{L^\infty(\T^d)}
+
C\norm v{L^\infty(\T^d)}
\norm h{L^\infty(\T^d)}
L^{d/p-s}.
\]

Now take $L=N\pi$, $N\in\N$. Since $Q_{N\pi}$ is the union of $N^d$
fundamental $2\pi$-periodic cells,
\[
\norm{\comm A v h}{L^p(Q_{N\pi})}
=
N^{d/p}\norm{\comm A v h}{L^p(\T^d)}.
\]
Dividing the preceding estimate by $N^{d/p}$ and absorbing the fixed
powers of $\pi$ into the constant, we obtain
\[
\norm{\comm A v h}{L^p(\T^d)}
\leq
C\norm{\Lambda^sv}{L^p(\T^d)}
\norm h{L^\infty(\T^d)}
+
CN^{-s}\norm v{L^\infty(\T^d)}
\norm h{L^\infty(\T^d)}.
\]
Letting $N\to\infty$ and using $s=1-\alpha>0$, we conclude that
\[
\norm{\comm{\mathcal R_{k,\alpha}}{v}h}{L^p(\T^d)}
\leq
C\norm{\Lambda^{1-\alpha}v}{L^p(\T^d)}
\norm h{L^\infty(\T^d)}.
\]

Finally, the $L^p$-boundedness of the periodic Riesz transforms implies
\[
\norm{\Lambda^{1-\alpha}v}{L^p}
\leq
C\norm{\nabla\Lambda^{-\alpha}v}{L^p}.
\]
This proves \eqref{eq:commutator} for smooth periodic functions.
The general case $v,h\in C^1(\T^d)$ follows by periodic mollification.
\end{proof}

\bigskip
\subsection*{Declaration on the use of generative AI}
During the preparation of this manuscript, the authors used ChatGPT (OpenAI) for language refinement, LaTeX formatting, and as an auxiliary tool to review logical consistency across proof steps. All mathematical content and arguments were independently analyzed, reconstructed, and verified by the authors, who take full responsibility for the accuracy and originality of the published work.

\bigskip

\bibliographystyle{plain}

\end{document}